\newtheorem{remark}[theorem]{Remark}
\newcommand{\R}{\mathbb R}
\newcommand{\wave}{\mathcal W}
\newcommand{\M}{\mathcal   M}
\newcommand{\radon}{\mathcal R}
\newcommand{\hilbert}{\mathcal  H}
\newcommand{\K}{\mathcal K}
\newcommand{\ubp}{\mathcal{B}_\Om}
\newcommand{\rmd}{\mathrm d}
\newcommand{\coloneqq}{:=}
\newcommand\abs[1]{\left\vert#1\right\vert}
\newcommand\sabs[1]{\lvert#1\rvert}
\newcommand\set[1]{\left\{#1\right\}}
\newcommand\sset[1]{\{#1\}}
\newcommand\inner[1]{\left\langle#1\right\rangle}
\newcommand\edot{\,\cdot\,}
\newcommand{\Om}{\Omega}
\newcommand{\om}{\omega}
\newcommand{\DD}{\mathcal{D}}
\newcommand{\eps}{\epsilon}
\newcommand{\kl}[1]{\left(#1\right)}
\newcommand{\mkl}[1]{\bigl(#1\bigr)}
\newcommand{\skl}[1]{(#1)}
\newcommand{\req}[1]{(\ref{eq:#1})}
\title{Universal Inversion Formulas for\\Recovering a Function from Spherical Means}
\author{Markus Haltmeier\thanks{Department of Mathematics, University of Innsbruck,
Technikestra{\ss}e 21a, A-6020 Innsbruck, Austria
(\href{mailto:markus.haltmeier@uibk.ac.at}{\tt markus.haltmeier@uibk.ac.at}).}}
\begin{document}
\maketitle

\begin{abstract}
The problem of reconstruction a function from  spherical means is at the
heart of several modern imaging modalities and  other  applications.
In this paper we derive universal  back-projection type reconstruction formulas
for recovering a function in arbitrary dimension from averages over spheres
centered on the boundary of an arbitrarily shaped bounded convex domain with smooth boundary.
Provided  that the unknown  function  is supported inside that domain, the derived  formulas recover  the
unknown function up to an  explicitly computed integral operator.
For elliptical domains the integral operator is shown to vanish and hence  we establish  exact inversion
formulas for recovering a function from spherical means centered on the boundary of
elliptical domains in arbitrary dimension.
\end{abstract}

\begin{keywords}
Spherical means,
reconstruction formula,
inversion formula,
wave equation,
universal back-projection,
Radon transform,
photoacoustic tomography,
thermoacoustic tomography.
\end{keywords}

\begin{AMS}
45Q05, 
65J22,
65M32,
92C55,
35L05.
\end{AMS}

\pagestyle{myheadings}
\thispagestyle{plain}
\markboth{MARKUS HALTMEIER}{UNIVERSAL  INVERSION  FORMULAS FOR
SPHERICAL MEANS}

\section{Introduction}

Let $\Om\subset \R^n$ be a bounded convex domain  in $\R^n$
with smooth boundary. In this paper we study the problem of recovering a function
$f \colon \R^n \to \R$  that is supported in $\Om$ from the
averages (spherical means)
\begin{equation} \label{eq:sm}
\kl{\M f} \kl{x, r}
\coloneqq
\frac{1}{\omega_{n-1}}
\int_{S^{n-1}} f \kl{ x + r \sigma} \rmd S\kl{\sigma}
\end{equation}
over spherical surfaces
with  centers $ x \in \partial \Om$ and radii $r>0$.
Here   $S^{n-1} \subset \R^n$ is the $n-1$ dimensional
unit sphere, $\om_{n-1}$ its total surface area and $\rmd S$
denotes the standard surface measure. Further recall that a domain is an open, connected, nonempty set. We develop closed form reconstruction formulas of the backprojection type
for recovering the function $f$ from its spherical means $\M f \kl{x, r}$
defined by \req{sm}.
The derived  formulas can be applied to
arbitrarily shaped domains in arbitrary dimensions and recover
the unknown function modulo an explicitly computed integral operator
$\K_\Om$.  For elliptical domains, the operator $\K_\Om$ is shown to vanish.
We therefore establish  exact  reconstruction formulas of the
backprojection type in these cases.
Our results  generalize the ones recently obtained in  \cite{Nat12} for $n =3$
and in \cite{Hal13a} for $n =2$ to the case of
arbitrary spatial dimension.

\begin{psfrags}
\psfrag{O}{$\Omega$}
\psfrag{S}{ $\partial \Omega$}
\psfrag{f}{$f$}
\psfrag{B}{$\set{ x_1 \in \R^n :  \abs{x_1-x} =r}$}
\psfrag{x}{$x$}
\begin{figure}
\centering
\includegraphics[width=0.7\textwidth]{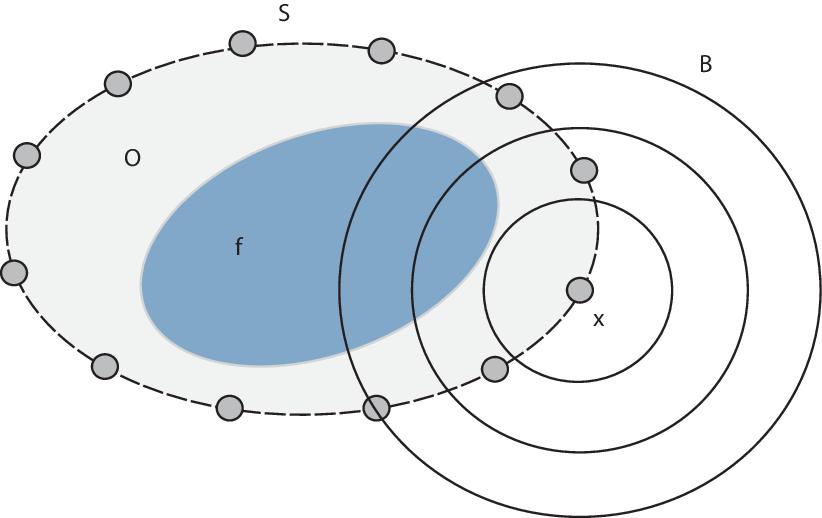}
\caption{\emph{Recovering a function from spherical means.}
Suppose  that the function $f$ (representing some physical quantity of
interest) is supported inside  the domain  $\Om$. Detectors are placed at various
locations $x$ on the boundary $\partial \Om$ of the domain and record
averages of $f$  over  spherical surfaces $\set{ x_1  \in \R^n : \abs{x_1-x} =r}$ with  radii $r>0$. In this paper we derive explicit formulas for recovering the function
$f$ from these spherical averages (see Theorems~\ref{thm:even},~\ref{thm:odd} and~\ref{thm:ell}).
\label{fig:setting}}
\end{figure}
\end{psfrags}

The problem of recovering a function from spherical means  is at the heart of many modern  imaging applications,
where the centers of the spheres of integration  correspond to admissible locations
of detectors recording some physical quantity encoded in
$f$; see Figure~\ref{fig:setting}.
For example, recovering a function from spherical means is essential
for the hybrid imaging techniques photoacoustic tomography (PAT) and
thermoacoustic tomography (TAT) where the function $f$ models the initial pressure of
the acoustic field induced by a short electromagnetic pulse.
In these  applications the inversion from spherical means  arises in
three spatial dimensions (see~\cite{FinRak09,KucKun08, XuWan06}) as well as in two spatial dimensions in variants of PAT/TAT using
integrating detectors (see \cite{BurBauGruHalPal07,PalNusHalBur07a,ZanSchHal09b})
instead of the more common point like detectors.
In fact  these applications initiated the authors interest in  the
problem of recovering a function from spherical means.
The inversion from spherical means is, however, is also  essential for other technologies such as SONAR (see \cite{BelFel09,QuiRieSch11}),
SAR imaging (see \cite{And88,RedPay03}),
ultrasound tomography (see \cite{Nor80,NorLin81}),
or seismic imaging (see \cite{BleCohSto01,Faw85}).

\subsection{Main results}

Before presenting our  main results, we  introduce some notation. For any integrable function $\varphi \colon \R^n \to \R$, we define
the Radon transform
\begin{equation*}
\kl{\radon \varphi} \kl{\omega,s}
:=
\int_{\omega^\bot}  \varphi\kl{ s \omega  +  y}
\rmd S\kl{y}
\qquad
\text{ for  }  \; \kl{\omega, s} \in S^{n-1} \times \R \,,
\end{equation*}
where $\omega^\bot \coloneqq \set{y \in \R^n: \omega \cdot y = 0}$ denotes
the hyperplane consisting of all vectors orthogonal to $\omega \in S^{n-1}$.
The derivative of a function  $\psi \colon S^{n-1} \times \R \to \R$
in the second argument will be denoted by
$\kl{\partial_s \psi} \kl{\omega, s}$, and   $\kl{\hilbert_s  \psi} \kl{\omega, s} $
will be used to denote the Hilbert transform  in the second argument
(defined as the  convolution with the principal value distribution
$1/\skl{\pi s}$).
Further, for two distinct points  $x_0, x_1 \in \R^{n}$, we set
\begin{equation} \label{eq:nr}
	\omega_\star\kl{x_0,x_1}
	\coloneqq
	\frac{x_1-x_0}{\abs{x_1-x_0}}
	\,,
	\quad
	s_\star\kl{x_0,x_1}
	\coloneqq
	\frac{1}{2} \; \frac{\abs{x_1}^2 - \abs{x_0}^2}{\abs{x_1-x_0}}
	\,.
\end{equation}
As illustrated  in Figure~\ref{fig:geometry}, the set  $H_\star\kl{x_0,x_1} =
\sset{ x \in \R^n : \omega_\star\kl{x_0,x_1} \cdot x   = s_\star\kl{x_0,x_1}}$ is the hyperplane
of all points having the same distance between $x_0$ and $x_1$. The unit vector
$\omega_\star\kl{x_0,x_1}$ is orthogonal to the plane $H_\star\kl{x_0,x_1}$ and $s_\star\kl{x_0,x_1}$ is the oriented distance of that plane from the origin.

\begin{psfrags}
\psfrag{a}{\scriptsize $x_0$}
\psfrag{b}{\scriptsize $x_1$}
\psfrag{s}{\scriptsize $s_\star\kl{x_0,x_1}$}
\psfrag{O}{\scriptsize domain $\Omega$}
\psfrag{S}{\scriptsize boundary  $\partial \Omega$}
\psfrag{m}{\scriptsize$\kl{x_1+x_0}/2$}
\psfrag{E}{\scriptsize$H_\star\kl{x_0,x_1}$}
\begin{figure}
\centering
\includegraphics[width=0.6\textwidth]{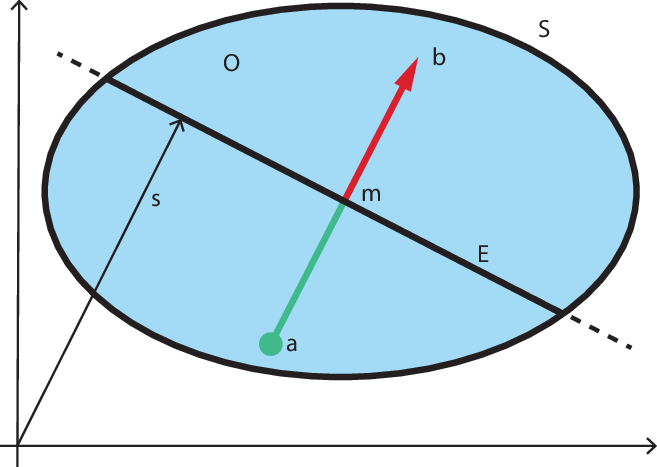}
\caption{\emph{Mid-plane between  two distinct points $x_0$ and $x_1$ in  $\R^n$.}
The hyperplane $H_\star\kl{x_0,x_1} = \sset{ x \in \R^n : \omega_\star\kl{x_0,x_1} \cdot x   = s_\star\kl{x_0,x_1}}$
is the mid-plane between $x_1$ and $x_0$, that is,  consists of all points having the same distance between
these two points.
The unit vector $\omega_\star\kl{x_0,x_1}    =  \skl{x_1-x_0}/ \sabs{x_1-x_0}$  is orthogonal to the plane
$H_\star\kl{x_0,x_1}$  and $s_\star\kl{x_0,x_1} = \kl{\sabs{x_1}^2 - \sabs{x_0}^2} / \kl{2 \sabs{x_1 -x_0}}$
is the oriented distance of that plane from the origin.\label{fig:geometry}}
\end{figure}
\end{psfrags}

\subsubsection*{Inversion on general domains}

Suppose that $\Om \subset \R^{n}$ is a bounded convex domain in
$\R^n$ with smooth boundary $\partial \Om$ and denote by
$\chi_\Om \colon \R^n \to \R$ the characteristic function of
$\Om$ (taking the value one inside the domain $\Om$ and the value zero outside).   Further, for  any  $C^\infty$ function $f \colon \R^n \to \R$  that is supported  inside $\Om$ and every $x_0 \in \Om$, we define
\begin{equation} \label{eq:K}
\kl{\K_\Om f} \kl{x_0} \coloneqq
\int_{\Om} k_\Om \kl{x_0,x_1} f\kl{x_1}  \rmd x_1 \,,
\end{equation}
with
\begin{equation}\label{eq:kern}
	 k_\Om \kl{x_0,x_1}
	  \coloneqq
	   \begin{cases} 
	  \frac{\kl{-1}^{\kl{n-2}/2}}{2^{n+1} \pi^{n-1}}
	  \,
	  \frac{\kl{ \partial_s^n \hilbert_s  \radon \chi_\Om}
	  \kl{\omega_\star\kl{x_0,x_1},  s_\star\kl{x_0,x_1}}}
	  {  \abs{x_1-x_0}^{n-1}}	
	& \text{ if  $n$ is even}
	\\[1.4em]
	 \frac{\kl{-1}^{\kl{n-1}/2}}{2^{n+1}\pi^{n-1}}
	 \,
	 \frac{\kl{ \partial_s^n  \radon \chi_\Om}
	\kl{\omega_\star\kl{x_0,x_1},  s_\star\kl{x_0,x_1}}}
	{\abs{x_1-x_0}^{n-1}}
	& \text{ if  $n$ is odd}
	\end{cases} \,.
\end{equation}
Here and in similar situations, $\partial_s^n$ denotes
the $n$-fold composition of the differentiation operator
$\partial_s$. Note that  $\omega_\star\kl{x_0,x_1}$,  $s_\star\kl{x_0,x_1}$ and $k_\Omega\kl{x_0,x_1}$
are only defined  when $x_0 \neq  x_1$.

\begin{remark}\label{rem:KOm}
Since  $\Omega$ is assumed to be a  convex domain
with $C^\infty$ boundary, the Radon transform of $\chi_\Omega$ is a smooth function except for pairs $\kl{\om, s} \in S^{n-1} \times \R$  where the corresponding
plane  $ \sset{ x \in \R^n : \omega \cdot x   = s}$ is tangential to
the boundary $\partial \Omega$. For two distinct points $x_0, x_1$ inside the domain $\Om$,
the mid-plane $ H_\star \kl{x_0, x_1} = \sset{ x \in \R^n :\omega_\star\kl{x_0,x_1} \cdot x   = s_\star\kl{x_0,x_1}}$ between these points
is never tangential to the boundary of the domain (see Figure~\ref{fig:geometry})
and further the operators $\partial_s^n$ and $\hilbert_s$
preserve the locations of singularities. Consequently, for any compact subset $K \subset \Omega$, the
functions $\skl{ \partial_s^n \hilbert_s  \radon \chi_\Om}
\skl{\omega_\star\kl{x_0,x_1},  s_\star\kl{x_0,x_1}}$
and $\skl{ \partial_s^n  \radon \chi_\Om}
\skl{\omega_\star\kl{x_0,x_1},  s_\star\kl{x_0,x_1}}$  are  $C^\infty$ and
bounded  on $\set{\kl{x_0,x_1} \in  K \times K \colon x_0\neq x_1}$.
This shows, that $k_\Om$ is  a weakly singular kernel on $K \times K$
and that the  integral operator
$\K_\Om \colon L^2\kl{K} \to L^2\kl{K}$ is well defined and compact.
 \end{remark}

The inversion formulas we establish in this paper
are exact modulo the integral operator $\K_\Om$.
They look somewhat different in even and in odd
dimensions and are stated in separate theorems
below.

\smallskip
In even dimensions, our main result is as follows.

\begin{theorem}[Inversion in even dimension] \label{thm:even}
Let $n \geq 2$ be an even natural number, let
$\Om \subset \R^n $ be a bounded convex domain with smooth
boundary $\partial \Om$, and  let $f \colon \R^n \to \R$ be $C^\infty$ and  supported inside $\Om$.

Then, for every $x_0 \in \Om$,
\begin{align}\nonumber
	f\kl{x_0}
	&=
	   \kl{\K_\Om  f}\kl{x_0}
	   +
	   \frac{\skl{-1}^{\skl{n-2}/2}
\om_{n-1}}{2 \pi^{n}} \, \times
	 \\ \label{eq:even-a}
           &
            \hspace{0.15\textwidth}  \nabla_{x_0} \cdot
	 \int_{\partial \Om}
	 \nu_x
	 \int_{0}^\infty
	 \frac{\kl{ r \DD_r^{n-2}
	 r^{n-2}\M f} \kl{x, r} }
	 {r^2 - \abs{x_0-x}^2}
	 \; \rmd r\rmd S\kl{x} \,,
\\[0.4em] \nonumber
	f\kl{x_0}
	&=
	   \kl{\K_\Om  f}\kl{x_0}
	   +
	  \frac{\skl{-1}^{\skl{n-2}/2}
	  \om_{n-1}}{2 \pi^{n}} \, \times
	  \\ \label{eq:even-b}
	& \hspace{0.15\textwidth}  \int_{\partial \Om} \nu_x \cdot  \kl{x_0-x}\int_{0}^\infty
	\frac{ \kl{\partial_{r} \DD_r^{n-2}
	 r^{n-2}\M f}  \kl{x, r}}
	{r^2 - \abs{x_0-x}^2}
	\; \rmd r
	\rmd S\kl{x}
	\,.
\end{align}
In both formulas, the inner integration is taken in the principal
value sense, $\rmd S$ is the usual  surface measure,
$\nu_x$ is the outward pointing  unit normal to $\Om$,
and $\K_\Om$ is  the integral operator defined by \req{K}, \req{kern}.
Moreover, $\DD_r \coloneqq  \skl{2r}^{-1}\partial_r$ denotes differentiation with respect to $r^2$
and  $\nabla_{x_0} \cdot$  the divergence with respect to $x_0$.
\end{theorem}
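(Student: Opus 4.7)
My plan is to verify \req{even-a} directly by substituting the definition of $\M f$ into the right-hand side, interchanging the orders of integration, and identifying the resulting integral kernel. The target is to rewrite the right-hand side as $\int_\Om K\kl{x_0,x_1} f\kl{x_1} \rmd x_1$ with $K\kl{x_0,x_1} = \delta\kl{x_0-x_1} - k_\Om\kl{x_0,x_1}$ as distributions on $\Om \times \Om$; the theorem then follows immediately from the definition of $\K_\Om$ in \req{K}. The companion formula \req{even-b} can be deduced from \req{even-a} by pulling the $x_0$-divergence under the integral (using $\nabla_{x_0} \cdot \nu_x = 0$) and absorbing the resulting factor $\skl{r^2-\abs{x-x_0}^2}^{-2}$ through one integration by parts in $r$, via $\partial_r = 2r\DD_r$.

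\textbf{Execution.} I first substitute $\skl{\M f}\kl{x,r} = \om_{n-1}^{-1} \int_{S^{n-1}} f\kl{x+r\sigma} \rmd S\kl{\sigma}$ and change variables to $x_1 = x + r\sigma$, with $r^{n-1} \rmd r \rmd S\kl{\sigma} = \rmd x_1$, so that the inner pair of integrations collapses to a single integral over $x_1 \in \R^n$. Next, I transfer the radial operator $r \DD_r^{n-2} r^{n-2}$ onto the Cauchy kernel $1/\skl{r^2-\abs{x-x_0}^2}$ by $\kl{n-2}$-fold integration by parts in the squared-radius variable $s \coloneqq r^2$, in which $\DD_r = \partial_s$. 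The decisive algebraic identity is
\begin{equation*}
r^2 - \abs{x-x_0}^2
= \abs{x_1-x}^2 - \abs{x_0-x}^2
= 2\abs{x_1-x_0}\kl{s_\star\kl{x_0,x_1} - \omega_\star\kl{x_0,x_1}\cdot x},
\qquad r=\abs{x_1-x},
\end{equation*}
which exhibits the Cauchy kernel as affine in $x$ and singular precisely on the midplane $H_\star\kl{x_0,x_1}$. The surface integral over $\partial \Om$ can then be collapsed through the flux-to-Radon identity
\begin{equation*}
\int_{\partial \Om} \Phi\kl{\omega\cdot x}\, \nu_x \, \rmd S\kl{x}
= \omega \int_{\R} \Phi'\kl{s}\, \skl{\radon \chi_\Om}\kl{\omega,s}\, \rmd s,
\end{equation*}
obtained from the divergence theorem applied to the field $\Phi\kl{\omega\cdot x}\, e$ for a constant $e$, which reduces the $\partial\Om$-integration to a one-dimensional principal-value integral of derivatives of $\skl{\radon \chi_\Om}\kl{\omega_\star,\cdot}$ against $1/\skl{s-s_\star}$. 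That principal-value integral is precisely the Hilbert transform $\hilbert_s$; this is the mechanism by which $\hilbert_s$ enters the even-dimensional kernel \req{kern}. The outer divergence $\nabla_{x_0}\cdot$ then supplies one further $s$-derivative (raising the count on $\radon \chi_\Om$ from $n-1$ to $n$), where the pleasant identities $s_\star\kl{x_0,x_1} = \omega_\star\cdot\kl{x_0+x_1}/2$ and $\omega_\star \cdot \nabla_{x_0} s_\star = 1/2$ tame the chain-rule bookkeeping.

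\textbf{Main obstacle.} The delicate point is the emergence of the identity part $\delta\kl{x_0-x_1}$ in the kernel $K$. This term must arise from the diagonal singularity of the Cauchy--Hilbert integral as $x_1 \to x_0$: at that degeneration the midplane $H_\star\kl{x_0,x_1}$ is no longer defined, the factor $\abs{x_1-x_0}^{n-1}$ in the denominator of \req{kern} vanishes, and only a distributional Sokhotski--Plemelj residue remains. Isolating this residue cleanly from the smooth remainder $-k_\Om\kl{x_0,x_1}$, and then matching the sign $\kl{-1}^{\kl{n-2}/2}$ produced by the $\kl{n-2}$ integrations by parts, the prefactor $\om_{n-1}/\kl{2\pi^n}$, and the explicit denominator $\abs{x_1-x_0}^{n-1}$, requires a careful regularization (for instance, replacing $r^2-\abs{x-x_0}^2$ by $r^2-\abs{x-x_0}^2 \pm i\eps$ and sending $\eps\downarrow 0$) rather than any new idea. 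Once this separation is effected and the constants are matched with \req{kern}, the kernel identity $K\kl{x_0,x_1} = \delta\kl{x_0-x_1} - k_\Om\kl{x_0,x_1}$ follows, and with it both formulas \req{even-a} and \req{even-b}.
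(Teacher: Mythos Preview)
Your strategy is a genuine alternative to the paper's proof. The paper does \emph{not} attack formula~\req{even-a} directly; instead it routes everything through the wave equation. Concretely, it first proves the auxiliary identity (Theorem~\ref{thm:ubp})
\[
f(x_0)=\kl{\ubp\wave f}(x_0)+\int_\Om k_\Om^{(2)}(x_0,x_1)f(x_1)\,\rmd x_1,
\]
obtained from Kirchhoff's integral representation for the solution $p=\wave f$ together with the outgoing Green function $G$. The identity part $f(x_0)$ is therefore present from the outset --- Kirchhoff provides it for free --- and the remaining work consists of two purely computational steps: (i) evaluating $k_\Om^{(2)}$ using the even-dimensional Green function \req{green-even} to recover the Hilbert--Radon expression in~\req{kern}, and (ii) expressing $\wave f$ in terms of $\M f$ and simplifying $\ubp\wave f$ into the backprojection integral of~\req{even-a}. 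The passage from~\req{even-a} to~\req{even-b} is done exactly as you describe.

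Your direct approach bypasses the wave equation entirely. The off-diagonal part of your kernel computation does go through essentially as you sketch: the ``pleasant identities'' really do tame the chain rule, because in addition to $\omega_\star\cdot\nabla_{x_0}s_\star=\tfrac12$ one has $(\omega_\star\cdot\nabla_{x_0})\omega_\star=0$ and $\nabla_{x_0}\cdot\bigl(\omega_\star/\abs{x_1-x_0}^{n-1}\bigr)=0$ away from the diagonal, so the only surviving term from $\nabla_{x_0}\cdot$ is indeed the extra $\partial_s$ that brings the count on $\radon\chi_\Om$ to $n$ and reproduces $-k_\Om$. The trade-off is precisely what you flag as the main obstacle: in your route the $\delta(x_0-x_1)$ term must be squeezed out of the diagonal singularity of a high-order principal-value kernel (equivalently, your divergence theorem step uses a $\Phi$ that is singular on a hyperplane intersecting $\Om$, so the na\"{\i}ve flux-to-Radon identity needs a finite-part correction). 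This is doable via the $\pm i\eps$ regularization you propose, but it is genuinely the hardest step, whereas the paper's Kirchhoff detour gives the identity term with no analysis at all. In short: your approach is more self-contained (no wave equation, no Green function), the paper's is structurally cleaner on the one point that matters most.
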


\begin{proof}
See Section~\ref{sec:even}.
\end{proof}

\medskip
In  odd dimension we have the following
corresponding result.

\begin{theorem}[Inversion in odd dimension] \label{thm:odd}
Let $n \geq 3$ be an odd natural number, let   $\Om \subset \R^n $ be a bounded
convex domain with smooth boundary $\partial \Om$,
and let $f \colon \R^n \to \R$ be $C^\infty$ and supported inside $\Om$.

Then, for every $x_0 \in \Om$,
\begin{align}\nonumber
	f\kl{x_0}
	&=
	   \kl{\K_\Om  f}\kl{x_0}
	   +
	   \frac{\skl{-1}^{\skl{n-3}/2}
	   \om_{n-1}}{4 \pi^{n-1}} \, \times
	 \\ \label{eq:odd-a}
           &
           \hspace{0.14\textwidth} \nabla_{x_0} \cdot
	 \int_{\partial \Om}
	 \nu_x
	 \kl{ \DD_r^{n-2} r^{n-2} \M f} \kl{x, \abs{x_0-x}}
	 \rmd S\kl{x} \,,
\\[0.4em] \nonumber
	f\kl{x_0}
	&=
	   \kl{\K_\Om  f}\kl{x_0}
	   +
	   \frac{\skl{-1}^{\skl{n-3}/2}
	   \om_{n-1}}{4\pi^{n-1}} \, \times
	  \\ \label{eq:odd-b}
	& \hspace{0.124\textwidth}
	\int_{\partial \Om} \nu_x \cdot  \frac{x_0-x}{\sabs{x_0-x}}
	\kl{ \partial_{r} \DD_r^{n-2}
	r^{n-2} \M f} \kl{x, \abs{x_0-x}}
	\rmd S\kl{x}
	\,.
\end{align}
Here $\K_\Om$, $\nu_x$, $\nabla_{x_0}$, $\rmd S$, and $\DD_r$ are as in
Theorem~\ref{thm:even}.
\end{theorem}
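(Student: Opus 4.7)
The scheme parallels the even-dimensional case (Theorem~\ref{thm:even}), with Huygens' principle in odd dimension responsible for the collapse of the radial integration in \req{even-a}--\req{even-b} to the single-time evaluation $r=\sabs{x-x_0}$ in \req{odd-a}--\req{odd-b}. The plan has three stages: (i) derive a universal volume representation of $f(x_0)$ as an $\R^n$-integral of derivatives of spherical means centered at arbitrary $x_1$; (ii) convert the part of this representation over $\R^n\setminus\Om$ into a boundary integral over $\partial\Om$ by the divergence theorem; (iii) identify the leftover integral over $\Om$ with $(\K_\Om f)(x_0)$.

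For stage~(i), I start from the local Radon inversion formula in odd dimension,
\begin{equation*}
f(x_0)=\frac{(-1)^{(n-1)/2}}{2(2\pi)^{n-1}}\int_{S^{n-1}}\bigl(\partial_s^{n-1}\radon f\bigr)\bigl(\om,\om\cdot x_0\bigr)\,\rmd S(\om),
\end{equation*}
and parametrize hyperplanes not passing through $x_0$ by their reflection partner: every such hyperplane equals $H_\star(x_0,x_1)$ for a unique $x_1\in\R^n\setminus\{x_0\}$, namely the reflection of $x_0$ across it. Writing $x_1=x_0+r\om$ with $r>0$ and using $\partial_s=2\partial_r$ at fixed $\om$ converts the $(\om,s)$-integration into an $\R^n$-integration in $x_1$ with weight $\sabs{x_1-x_0}^{-(n-1)}$. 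Combined with the coarea identity that expresses $\radon f(\om_\star(x_0,x_1),s_\star(x_0,x_1))$ through spherical means of $f$ centered at $x_1$, this yields a representation of $f(x_0)$ as an $\R^n$-integral of $\sabs{x_1-x_0}^{-(n-1)}$ times a combination of $\partial_r$ and $\DD_r$ acting on $r^{n-2}Mf(x_1,r)$, evaluated at $r=\sabs{x_1-x_0}$.

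For stage~(ii), I split $\int_{\R^n}=\int_\Om+\int_{\R^n\setminus\Om}$ and use the fact that $f$ vanishes on $\R^n\setminus\Om$, together with the Darboux identity $(\partial_r^2+\tfrac{n-1}{r}\partial_r)Mf(x_1,r)=\Delta_{x_1}Mf(x_1,r)$, to rewrite the exterior integrand as an $x_1$-divergence; the divergence theorem then produces the boundary integral over $\partial\Om$ appearing in \req{odd-b}, the outward normal $\nu_x$ arising after the orientation flip in passing from $\R^n\setminus\Om$ to $\Om$. Formula \req{odd-a} follows from \req{odd-b} by means of the chain-rule identity $\nabla_{x_0}\sabs{x-x_0}=(x_0-x)/\sabs{x-x_0}$. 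For stage~(iii) I substitute $f\chi_\Om$ for $f$ on $\Om$ and undo the stage-(i) change of variables; the integrand becomes $\sabs{x_1-x_0}^{-(n-1)}\,\partial_s^n\radon\chi_\Om$ evaluated at $(\om_\star,s_\star)$, which matches the odd-$n$ branch of \req{kern}. The extra derivative $\partial_s^n$ versus $\partial_s^{n-1}$ originates precisely from the $\partial_r$ absorbed in rewriting the exterior integrand as a divergence.

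The main anticipated obstacle is the rigorous justification of the integration by parts in stage~(ii): the kernel has a weak singularity at $x_1=x_0$ and one must check the vanishing of boundary contributions at infinity. I would excise a small ball $B_\eps(x_0)$, apply the divergence theorem on the truncated region $\R^n\setminus(\Om\cup B_\eps(x_0))$, and pass to the limit $\eps\to 0$; the inner contribution on $\partial B_\eps(x_0)$ is absorbed into $f(x_0)$ via the elementary limit $Mf(x_0,\eps)\to f(x_0)$, while a decay estimate for large $\sabs{x_1}$ (using the compact support of $f$ and the resulting eventual vanishing of $Mf(x_1,\sabs{x_1-x_0})$) controls the far boundary.
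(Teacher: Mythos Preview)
Your approach is genuinely different from the paper's, and as written it has a structural gap in stage~(i) that propagates to stage~(iii).

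The paper never touches the Radon inversion formula for $f$. It works entirely through the wave equation: Kirchhoff's integral representation for the solution $p=\wave f$ of \req{wave} already expresses $f(x_0)$ as a boundary integral over $\partial\Om$ plus a volume integral over $\Om$ (Theorem~\ref{thm:ubp}). The boundary part is $(\ubp\wave f)(x_0)$, and inserting the explicit odd-dimensional fundamental solution $G(x,t)=c\,\DD_t^{(n-3)/2}\delta(t^2-|x|^2)$ converts it into the spherical-means expressions in \req{odd-a}--\req{odd-b}. For the volume part, the same fundamental solution lets one evaluate the kernel $k^{(2)}_\Om(x_0,x_1)=(\nabla_{x_0}+\nabla_{x_1})^2\int_\Om\int_\R(\partial_tG(x-x_1,t))G(x-x_0,t)\,\rmd t\,\rmd x$ explicitly: the $t$-integral collapses to a multiple of $\delta^{(n-2)}(|x-x_0|^2-|x-x_1|^2)$, and the remaining $x$-integral over $\Om$ \emph{is} a Radon transform of $\chi_\Om$ along the midplane $H_\star(x_0,x_1)$, yielding exactly the odd branch of \req{kern}. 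Crucially, the factor $f(x_1)$ is separated from the outset, because the representation $p=\int_\Om(\partial_tG(\cdot-x_1,\cdot))f(x_1)\,\rmd x_1$ is substituted into the Kirchhoff volume term before any further manipulation.

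In your scheme, stage~(i) is where the argument breaks. The odd-$n$ Radon inversion integrates only over $\om\in S^{n-1}$ with $s$ pinned to $\om\cdot x_0$; there is no genuine $(\om,s)$-integration to reparametrize by $x_1\in\R^n$. If you manufacture an $s$-integral by writing $\partial_s^{n-1}\radon f|_{s=\om\cdot x_0}$ as a pairing against $\delta^{(n-1)}(s-\om\cdot x_0)$ and then pass to $x_1=x_0+r\om$, the resulting distribution is supported at $r=0$ and no volume integral has been gained. The unnamed ``coarea identity'' relating $\radon f(\om_\star,s_\star)$ to spherical means centered at $x_1$ would have to supply the missing radial spread, but you have not stated it, and I do not know an identity of that kind that evaluates $\M f$ only at the single radius $|x_1-x_0|$. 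This gap resurfaces in stage~(iii): your stage-(i) integrand involves $\M f(x_1,\cdot)$, so ``undoing the change of variables'' cannot by itself produce a \emph{factored} integrand $f(x_1)\cdot k_\Om(x_0,x_1)$ with $k_\Om$ depending only on $\chi_\Om$; some additional mechanism is needed and none is described. (Your passage between \req{odd-a} and \req{odd-b} via the chain rule is fine and matches the paper.)
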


\begin{proof}
See Section~\ref{sec:odd}.
\end{proof}

Both, Theorem \ref{thm:even} and Theorem  \ref{thm:odd} will
follow from corresponding  statements  for the inversion of the wave
equation in even and odd dimensions,
which we shall establish in the following sections
(see Theorems~\ref{thm:wave-even} and \ref{thm:wave-odd}).

\subsubsection*{Exact reconstruction for elliptical domains}

In the case that $\Om$ is an elliptical domain, we show that the integral operator
$\K_\Om$  vanishes exactly and therefore Theorems
\ref{thm:even} and \ref{thm:odd} provide exact reconstruction
formulas for ellipsoids. After translation and rotation, we may assume that the
elliptical domain takes the standard form
\begin{equation*} 
	\Om
	\coloneqq  \set{ x \in \R^n :
	\abs{ A^{-1} x }^2  < 1 } \,,
\end{equation*}
where $A \in \R^{n \times n}$ is a diagonal matrix with positive
(possibly distinct) diagonal entries. Obviously, a ball  is a special case of an
elliptical domain where all diagonal entries of $A$ coincide and are equal to the
radius of the ball.

\medskip
For elliptical domains we have the following exact inversion
formulas.

\begin{theorem}[Exact inversion on elliptical domains] \label{thm:ell}
Let $\Om \subset\R^n$ be an elliptical domain.
Then  $\K_{\Om} f$ vanishes identically on $\Om$.
In particular, for any smooth  function $f\colon \R^n \to \R$
that is supported inside $\Om$ and every $x_0 \in \Om$, the following hold:
\begin{enumerate}[itemsep=0em,topsep=0em,label=(\alph*)]
\item \label{it:ell-even}
If $n$ is even, then
\begin{align}\nonumber
	f\kl{x_0}
	&=\frac{\skl{-1}^{\skl{n-2}/2}
	   \om_{n-1}}{2 \pi^{n}}\, \times
	   \\ \label{eq:inv-ell-even-a}
	   & \hspace{0.05\textwidth}
	   \nabla_{x_0} \cdot \int_{\partial \Om}
	   \nu_x
	   \int_{0}^\infty
	   \frac{\kl{r \DD_r^{n-2}
	   r^{n-2}\M f} \kl{x, r} }
	  {r^2 - \abs{x_0-x}^2}
	  \, \rmd r\rmd S\kl{x} \,,
\\[0.4em] \nonumber
	f\kl{x_0}
	&=
	  \frac{\skl{-1}^{\skl{n-2}/2}
	  \om_{n-1}}{2 \pi^{n}} \, \times
	  \\ \label{eq:inv-ell-even-b}
	& \hspace{0.05\textwidth}  \int_{\partial \Om} \nu_x \cdot  \kl{x_0-x}\int_{0}^\infty
	\frac{ \kl{\partial_{r} \DD_r^{n-2}
	 r^{n-2}\M f}  \kl{x, r}}
	{r^2 - \abs{x_0-x}^2}
	\; \rmd r
	\rmd S\kl{x}
	\,.
\end{align}

\item \label{it:ell-odd}
If $n$ is odd, then
\begin{align} \nonumber
	f\kl{x_0}
	&=
	   \frac{\skl{-1}^{\skl{n-3}/2}
	   \om_{n-1}}{4\pi^{n-1}} \, \times
	   \\ \label{eq:inv-ell-odd-a}
	   & \hspace{0.06\textwidth}
	   \nabla_{x_0} \cdot
	    \int_{\partial \Om}
	 \nu_x
	 \kl{ \DD_r^{n-2} r^{n-2} \M f} \kl{x, \abs{x_0-x}}
	 \rmd S\kl{x}
	 \,,
	 \\[0.4em] \nonumber
	f\kl{x_0}
	&=
	   \frac{\skl{-1}^{\skl{n-3}/2}
	   \om_{n-1}}{4\pi^{n-1}} \, \times
	  \\ \label{eq:inv-ell-odd-b}
	& \hspace{0.06\textwidth}
	\int_{\partial \Om} \nu_x \cdot \frac{x_0-x}{\sabs{x_0-x}}
	\kl{ \partial_{r} \DD_r^{n-2}
	r^{n-2} \M f} \kl{x, \abs{x_0-x}}
	\rmd S\kl{x} \,.
\end{align}
\end{enumerate}
Here  $\nu_x$, $\nabla_{x_0}$, $\rmd S$, and $\DD_r$ are as in
Theorem~\ref{thm:even}.
\end{theorem}

\begin{proof}
See Section \ref{sec:ell}.
\end{proof}

By taking limits one can easily establish exact inversion formulas like \req{inv-ell-even-a}--\req{inv-ell-odd-b} for certain unbounded domains, such as for elliptical cylinders. We omit  formulating  such generalizations.
Further, it  would be interesting so find further domains
$\Omega$, where the integral operator $\K_\Om$ can be shown to vanish. Such an investigation, however, is beyond the scope of this paper.

\subsection{Relations to previous  work}

Exact back-projection type inversion formulas for recovering a function from
spherical means  with centers on the boundary of  a ball have been discovered
quite recently in \cite{FinHalRak07,FinPatRak04, Kun07a,Ngu09,XuWan05}.
In \cite{XuWan05} a formula has been found for $n = 3$,
which has later been generalized to arbitrary dimensions in \cite{Kun07a}.
In odd dimension these  formulas coincides with our
Equation~\req{inv-ell-odd-a} (which, however, holds for the more general
case of elliptical center sets). A different set of exact reconstruction formulas
has been derived in \cite{FinPatRak04} for odd dimensions and in
\cite{FinHalRak07} for even dimensions. In~\cite{FinRak09} relations between
the different formulas have been investigated for dimensions $n=2$ and
$n =3$. None of these papers considers the case of more general
domains. In~\cite{Kun11} reconstruction formulas of the back-projection type have been
found for certain polygons and polyhedra in two and three  spatial dimensions.

Formulas that recover a function from spherical means with centers
on the boundary of an elliptical domain in arbitrary dimension have been obtained
in~\cite[Equations~(20),~(21)]{Pal12}. The derived identities
as well as the method of proof are different from ours.
Our results are, however, closely related to ones of~\cite{Hal13a,Nat12}.
Actually, the present article generalizes the result obtained
for $n=2$ in \cite{Hal13a}
and for $n=3$ in \cite{Nat12}
to the case of arbitrary spatial dimension.

\subsection{Outline}

The main aim of the following sections is the proof of
Theorems~\ref{thm:even},~\ref{thm:odd} and~\ref{thm:ell}.
To that end, we first derive an   auxiliary identity for the wave equation in Section~\ref{sec:wave-aux}
(see Theorem~\ref{thm:ubp}).
Subsequently, in Section~\ref{sec:even} we shall prove
Theorem~\ref{thm:even} and in Section~\ref{sec:odd} we
establish Theorem~\ref{thm:odd}.
In these sections, we also derive corresponding statement for recovering the initial data of the wave equation
from the solution  on the boundary of  an arbitrarily shaped domain. These results, which  are also
of  interest in their own, will be presented in
Theorems~\ref{thm:wave-even} and~\ref{thm:wave-odd} below.
In Section~\ref{sec:ell} we consider the case of elliptical domains, where we show that the operator
$\K_\Om$ vanishes identically and therefore we establish the exact reconstruction
formulas stated in Theorem~\ref{thm:ell}.
The paper concludes with a  discussion in Section~\ref{sec:discussion}.

\section{Auxiliary  results for the wave equation}
\label{sec:wave-aux}

Let  $\Om \subset \R^n$ be a bounded convex domain in
$\R^n$ with smooth boundary $\partial \Om$ and  let
$f \colon  \R^n\to \R $ be a smooth function that is supported inside $\Om$.
Consider the following initial value problem for the wave equation
\begin{equation}  \label{eq:wave}
	\left\{ \begin{aligned}
	\kl{\partial_t ^2  - \Delta_{x} } p\kl{x,
	t}
	&=
	0 \,,
	 & \text{ for }
	\kl{x,t} \in
	\R^n \times \kl{0, \infty}
	\\
	p\kl{x,0}
	&=
	f\kl{x} \,,
	& \text{ for }
	x  \in \R^n
	\\
	\partial_t
	p\kl{x,0}
	&=0 \,,
	& \text{ for }
	x  \in \R^n
\end{aligned} \right.\,.
\end{equation}
Here $\partial_t$ denotes differentiation with respect to the temporal variable
 $t \in \kl{0, \infty}$ and  $\Delta_{x}$ is the  Laplacian in the spatial variable $x \in \R^n$.  To indicate the dependance
 of the initial data
 we will also write   the solution of \req{wave} as $p = \wave f$.

According to the well known explicit formulas for the solution of
\req{wave} in terms of spherical means,  recovering a function from
spherical means is essentially equivalent  to the problem of recovering
the initial data in \req{wave} from values of the solution on $\partial \Om \times \kl{0, \infty}$.  In this section we derive a basic result for the wave inversion
which in the following sections will be applied to derive the results for the
inversion form spherical means presented in the introduction.

\subsection{Outgoing fundamental solution}

Throughout the following  we denote by $G\kl{x, t}$ the outgoing fundamental solution (or free space Green's function)
of the wave equation,  that vanishes on $\set{t <0}$ and satisfies the equation
\begin{equation*}
	\kl{\partial_t^2  - \Delta_{x} }  G\kl{x, t}
 	=   \delta_{n} \kl{x} \delta \kl{t} \,,
	\qquad  \text{ for  all }
	\kl{x, t}
	\in \R^n
	\times  \R \,.
\end{equation*}
Here and in the following, $\delta_n$ and $\delta$ denote the
$n$-dimensional  and one-dimensional delta distribution, respectively.

\begin{remark}
By definition, the outgoing fundamental solution is a
distribution on $\R^n \times \R$.
The  arguments in $G\kl{x, t}$ do not mean a point-evaluation at $\kl{x, t}$
but are only a formal notation indicating the  variables, where this distributions acts on.
Derivatives of the fundamental solution,
like $\partial_t G\kl{x, t}$, will  always denote distributional derivatives.
Further, notice that the mapping $t \mapsto G\kl{\edot, t}$  from $(0, \infty)$ to the space $\mathcal D'\kl{\R^n}$ of distributions on
$\R^n$ is well defined and $C^\infty$.
\end{remark}

With the outgoing fundamental solution of the wave equation,
the solution of the initial  value problem \req{wave} can be
written as
\begin{equation} \label{eq:sol-green}
 	p \kl{x, t}
	=
	\int_{\Om}
	\partial_t G\kl{x- x_1, t}
	f \kl{x_1} \rmd x_1
	\,,
	\qquad \text{ for all } \kl{x, t } \in
	\R^{n} \times \kl{0, \infty} \,.
 \end{equation}
Throughout this paper  integrals  like the one on  the right
hand side in~\req{sol-green} will  always be  understood in the weak sense
(for any fixed $t$).
Hence, identity \req{sol-green} actually means, that
\begin{equation} \label{eq:sol-green-full}
 	\int_{\R^n}
	p \kl{x, t}
	\varphi \kl{x}
    \rmd x
	=
	\int_{\Om}
 	\inner{ \partial_t  G \kl{\edot - x_1,t} ,  \varphi }
	f \kl{x_1}
	\rmd x_1
 \end{equation}
 for any  smooth test function $\varphi  \colon \R^n  \to \R$ with  $\inner{\edot,\edot}$
 denoting the duality  pairing between a distribution and a test function
 on $\R^n$.

After inserting the known explicit  expressions for the outgoing
fundamental solution $G$,  Equation \req{sol-green} can be rewritten in
terms of spherical means of the function $f$.
The explicit solution formulas differ in even and in odd dimensions and will be stated in
Sections \ref{sec:even} and \ref{sec:odd},
where we study the even and the odd dimensional case separately
and in more detail.

\subsection{Kirchhoff integral representation}

The following  Kirchhoff integral representation  relates the initial conditions
 of the free space wave equation \req{wave} with boundary values on some domain.
It is well known for three spatial
dimension (and follows, for example, from \cite[Equation~(4.1.25)]{Fri75}) but we did not found a reference for the  case of arbitrary dimension.
Since the Kirchhoff integral representation serves as the basis of our further  computations, we decided to include a simple derivation based on Greens second identity.

\begin{lemma}[Kirchhoff integral representation]\label{lem:kirch}
Let  $\Om \subset \R^n$ be a bounded domain
with smooth boundary $\partial \Omega$, let $p = \wave f$ denote the solution of \req{wave} with initial data $f \in C^\infty_c \kl{\Omega}$, and let $G$ denote the outgoing fundamental solution of the wave equation.   Then, for every $x_0 \in \Om$, we have
\begin{multline}\label{eq:kirch}
f \kl{x_0}
=
\int_{\partial \Om}
\nu_x \cdot  \int_{\R}
G\kl{x_0-x, t} \nabla_{x} p \kl{x,t}
\rmd t\rmd S\kl{x}
\\
-
\int_{\partial \Om}
\nu_x \cdot
\int_{\R}
\nabla_{x} G\kl{x_0-x, t}
p \kl{x,t}
\rmd t
\rmd S\kl{x} \,.
\end{multline}
\end{lemma}

\begin{proof}
Greens second identity applied with
$G \kl{x_0 - \edot, t}$ and
$p \kl{\edot, t}$ for fixed $\kl{t,x_0} \in (0, \infty) \times \Om$
yields
\begin{multline*}
\int_{\partial \Om}
\nu_x \cdot
\kl{
G\kl{x_0-x, t} \nabla_x p\kl{x,t}
 -
p \kl{x,t} \nabla_x G\kl{x_0-x, t}
 }
\,
\rmd S\kl{x}
\\
\begin{aligned}
&=
\int_{\Om}
\kl{ G\kl{x_0-x, t}
\Delta_x
p\kl{x,t}
-
p\kl{x,t}
\Delta_x G\kl{x_0-x, t}
}
\rmd x
\\
&=
\int_{\Om}
\kl{
G\kl{x_0-x, t}
\partial_t^2
p\kl{x,t}
-
p\kl{x,t}
\partial_t^2 G\kl{x_0-x, t}
}
\rmd x  \,.
\end{aligned}
\end{multline*}
For the second equality we used the assumption, that
$G \kl{x_0 - \edot, t}$ and $p \kl{\edot, t}$ both satisfy the wave equation on $\set{t >0}$.
Integrating the above identity  over some finite time interval
$[T_1, T_2] \subset (0, \infty)$, interchanging the order of integration,
and performing two integration by parts gives
\begin{multline*}
\int_{T_1}^{T_2}
\int_{\partial \Om}
\nu_x \cdot
\kl{
G\kl{x_0-x, t} \nabla_x p\kl{x,t}
 -
p \kl{x,t} \nabla_x G\kl{x_0-x, t}
 }
\,
\rmd S\kl{x}\rmd t
\\
\begin{aligned}
&=
\int_{\Om}
G\kl{x_0-x, T_2}
\partial_t p\kl{x,T_2}
\rmd x
-
\int_{\Om}
 p\kl{x,T_2}
\partial_t G \kl{x_0-x, T_2}
\rmd x
\\
&
-
\int_{\Om}
G\kl{x_0-x, T_1}
\partial_t p\kl{x,T_1}
\rmd x
+
\int_{\Om}
 p\kl{x,T_1}
\partial_t G \kl{x_0-x, T_1}
\rmd x
 \,.
\end{aligned}\end{multline*}
Since any solution of the wave equation with compact support tends to zero
as $t \to \infty$ (uniformly on every bounded set), the first two terms term on the right hand  vanish as
$T_2 \to \infty$. Next, we note that by Duhamel's principle, we have
$ \lim_{t \to 0} G\kl{x_0-\edot, t} =  0$ and
$ \lim_{t \to 0} \partial_t G\kl{x_0-\edot, t} =  \delta \kl{x_0-\edot}$.
Hence the latter difference converges to
$f \kl{x_0}$ as $T_1 \to 0$,  which yields the claimed representation \req{kirch}.
\end{proof}

In the proof of Lemma~\ref{lem:kirch} as well as in the following
derivations we  formally operate with distributions as they were classical functions.
These computations can be made more rigorous by writing down all equalities in
the weak sense (similar as done in~\cite{Hal13a} for the two dimensional case)
and then using classical integral calculus.  However, the calculus using distributions used in the present  paper seems to be more intuitive and easier to follow and has also been
used in~\cite{FinPatRak04,Nat12} for the  derivation of  inversion  formulas
in three spatial dimensions.

\subsection{Universal backprojection}

For any smooth   function $v \colon \partial \Om \times \kl{0,\infty} \to \R$
we define
\begin{equation} \label{eq:ubp}
\kl{\ubp v}\kl{x_0}
\coloneqq
2\, \nabla_{x_0} \cdot \int_{\partial \Om} \nu_x \int_{\R}
G \kl{x- x_0, t} v \kl{x, t } \rmd t \rmd S \kl{x} \,,
\quad \text{ for } x_0 \in \Om \,.
\end{equation}
Note that in  the even dimensional case, the function
$v \kl{x, t}$ needs some decay as $t \to \infty$ in order that the
integral $\kl{\ubp v}\kl{x_0}$ is  well defined.
This  is  certainly the case if $v$ is the restriction of the solution of the
initial value problem \req{wave} for some compactly supported initial data $f$,
which happens in all instances where we apply the operator $\ubp$.
In three  spatial dimensions (and in a slightly different form),
the  inversion integral \req{ubp} has been introduced to
photoacoustic tomography in \cite{XuWan05}.
According to the notion of \cite{XuWan05}, we call
$\ubp$ the \emph{universal backprojection operator}.

\smallskip
In~\cite{XuWan05} it has been shown that the identity  $\ubp \wave f  = f$
holds for the case that $\Om$ is an open ball in three spatial dimensions and that $f$
is supported inside $\Om$.
This exact reconstruction property does not hold for general
domains. However, for arbitrarily shaped domains in arbitrary
dimensions  we have  the  following result:

\begin{theorem}\label{thm:ubp}
Let  $\Om \subset \R^n$ be a bounded convex domain with smooth boundary.
Then, for any $C^\infty$ function $f \colon \R^n \to \R$ with support in $\Om$ and any $x_0 \in \Omega$, we have
 \begin{equation} \label{eq:inv-a}
	f  \kl{x_0}
	=
	\kl{\ubp \wave f}  \kl{x_0}
	+
	\int_{\Om} f\kl{x_1} k^{(2)}_{\Om} \kl{ x_0,x_1}
	\rmd x_1 \,,
\end{equation}
with the distributional kernel
\begin{equation} \label{eq:kern-a}
k^{(2)}_{\Om} \kl{ x_0,x_1}
\coloneqq
\kl{\nabla_{x_0}+\nabla_{x_1} }^2
\int_{\Om}
\int_{\R}
\kl{\partial_t G \kl{x_1-x,t}} G\kl{x_0-x,t}
\, \rmd t \,   \rmd x \,.
\end{equation}
Here $\kl{\nabla_{x_0} + \nabla_{x_1} }^2$ is s shorthand notation for the
operator $\kl{\nabla_{x_0} + \nabla_{x_1} } \cdot \kl{\nabla_{x_0} + \nabla_{x_1} }$
and the equations \req{inv-a} and \req{kern-a} have to be read  in the
weak sense.
\end{theorem}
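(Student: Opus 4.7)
The plan is to substitute the Green's function representation \req{sol-green} of $\sol = \wave f$ into the definition \req{ubp} of $\ubp$, convert the resulting surface integral over $\partial \Om$ into a volume integral over $\Om$ via the divergence theorem, and then identify the difference between $f\kl{x_0}$ and $\kl{\ubp \wave f}\kl{x_0}$ as the claimed kernel contribution. All manipulations are understood in the weak/distributional sense, as allowed by the theorem statement.

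Introducing the abbreviation
\[
J\kl{x_0, x_1} \coloneqq \int_\Om \int_\R G\kl{x - x_0, t}\, \partial_t G\kl{x - x_1, t}\, \rmd t\, \rmd x,
\]
so that by \req{kern-a} one has $k^{(2)}_{\Om}\kl{x_0, x_1} = \kl{\nabla_{x_0} + \nabla_{x_1}}^2 J\kl{x_0, x_1}$, I would first substitute \req{sol-green} into \req{ubp} and exchange the order of integration, which after setting $F\kl{x} = \int_\R G\kl{x-x_0, t}\, \partial_t G\kl{x-x_1, t}\, \rmd t$ gives
\[
\kl{\ubp \wave f}\kl{x_0} = 2 \int_\Om f\kl{x_1}\, \nabla_{x_0} \cdot \int_{\partial \Om} \nu_x\, F\kl{x}\, \rmd S\kl{x}\, \rmd x_1.
\]
The divergence theorem applied componentwise turns the inner boundary integral into $\int_\Om \nabla_x F\kl{x}\, \rmd x$. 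Because $F$ depends on $\kl{x, x_0, x_1}$ only through the translation-invariant combinations $x-x_0$ and $x-x_1$, the key identity $\nabla_x F = -\kl{\nabla_{x_0} + \nabla_{x_1}} F$ holds, and therefore
\[
\kl{\ubp \wave f}\kl{x_0} = -2 \int_\Om f\kl{x_1}\, \nabla_{x_0} \cdot \kl{\nabla_{x_0} + \nabla_{x_1}} J\kl{x_0, x_1}\, \rmd x_1.
\]

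The algebraic crux is the operator identity
\[
-2\, \nabla_{x_0} \cdot \kl{\nabla_{x_0} + \nabla_{x_1}} = -\kl{\nabla_{x_0} + \nabla_{x_1}}^2 + \kl{\Delta_{x_1} - \Delta_{x_0}},
\]
which splits $\kl{\ubp \wave f}\kl{x_0}$ into a ``kernel'' contribution $-\int_\Om f\kl{x_1}\, k^{(2)}_\Om\kl{x_0, x_1}\, \rmd x_1$ and a ``residual'' $\int_\Om f\kl{x_1}\, \kl{\Delta_{x_1} - \Delta_{x_0}} J\kl{x_0, x_1}\, \rmd x_1$. To evaluate the residual, I would apply the distributional form $\Delta G = \partial_t^2 G - \delta_n \delta$ of the wave equation to each Laplacian inside $J$: the smooth $\partial_t^2 G$ contributions from $\Delta_{x_0} J$ and $\Delta_{x_1} J$ cancel after two integrations by parts in $t$, while the singular $\delta_n \delta$ contributions (interpreted with the convention $\partial_t G\kl{x, 0} = \delta_n\kl{x}$ dictated by the initial-value representation of $\sol$) combine additively to produce $\kl{\Delta_{x_1} - \Delta_{x_0}} J\kl{x_0, x_1} = \delta_n\kl{x_0 - x_1}$ on $\Om \times \Om$. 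Integrating against $f$ yields $f\kl{x_0}$, and combining with the kernel contribution gives \req{inv-a}.

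The hardest part will be the distributional bookkeeping in the final step. The pointwise product $G\kl{x-x_0, t}\, \partial_t G\kl{x-x_1, t}$ is a product of singular distributions and has to be interpreted weakly; the integrations by parts in $t$ require control of the boundary behavior as $t \to \infty$ (an issue especially in even dimensions, where $G$ decays only algebraically); and the $\delta$ and $\delta'$ contributions at $t = 0$ must be handled with a consistent convention so that the source terms from $\Delta_{x_0} J$ and $\Delta_{x_1} J$ do not cancel but reinforce. Once these technicalities are settled, the argument reduces to the three clean manipulations above: swapping integrations, the divergence theorem combined with translation invariance of $G$, and the algebraic split of $-2\, \nabla_{x_0} \cdot \kl{\nabla_{x_0} + \nabla_{x_1}}$.
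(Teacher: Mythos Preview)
Your argument is correct but follows a genuinely different route from the paper. The paper starts from the Kirchhoff integral representation
\[
f(x_0)=\int_{\partial\Om}\nu_x\cdot\int_\R\bigl[(\nabla_x p)\,G - p\,\nabla_x G\bigr]\,\rmd t\,\rmd S(x),
\]
rewrites $G\,\nabla_x p=\nabla_x(pG)-p\,\nabla_x G$, applies the divergence theorem to the total-derivative piece, and uses $\nabla_x G(x-x_0,t)=-\nabla_{x_0}G(x-x_0,t)$ to obtain directly
\[
f(x_0)=\int_\Om\int_\R\Delta_x\bigl(p\,G\bigr)\,\rmd t\,\rmd x + (\ubp\wave f)(x_0).
\]
Inserting \req{sol-green} for $p$ and using $\nabla_x=-(\nabla_{x_0}+\nabla_{x_1})$ on the product $\partial_t G(x-x_1,t)\,G(x-x_0,t)$ immediately identifies the volume term with the $k^{(2)}_\Om$ kernel. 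No evaluation of $G$ or its derivatives at $t=0$ is ever needed.

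Your route instead starts from $\ubp$, passes to a volume integral, and then relies on the algebraic split $-2\nabla_{x_0}\cdot(\nabla_{x_0}+\nabla_{x_1})=-(\nabla_{x_0}+\nabla_{x_1})^2+(\Delta_{x_1}-\Delta_{x_0})$ together with the distributional identity $(\Delta_{x_1}-\Delta_{x_0})J=\delta_n(x_0-x_1)$. This is valid, but the last identity forces you to confront the behaviour of $\partial_t G$ at $t=0$ and the products $\delta(t)\,\partial_t G$, $\delta'(t)\,G$ --- exactly the bookkeeping you flag as the hardest part. The paper sidesteps all of this because Kirchhoff's formula already encodes the recovery of $f(x_0)$; only a rearrangement of boundary versus volume terms is required. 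What your approach buys is independence from Kirchhoff (everything is derived from the definition of $\ubp$ and the equation for $G$), at the cost of the delicate $t=0$ analysis.
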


\begin{proof}
According to Kirchhoff's integral representation (see Lemma~\ref{lem:kirch}),
the solution $p = \wave f$  of  the initial value problem \req{wave}
satisfies
\begin{multline*}
f \kl{x_0}
=
\int_{\partial \Om}
\nu_x \cdot  \int_{\R}
 G\kl{x_0-x, t}
\nabla_{x}p \kl{x,t}
\rmd t\rmd S\kl{x}
\\
-
\int_{\partial \Om}
\nu_x \cdot
\int_{\R}
p \kl{x,t}
\nabla_{x} G\kl{x_0-x, t}
\rmd t
\rmd S\kl{x} \,.
\end{multline*}
Now inserting the identity $G \nabla_{x} p  = \nabla_{x}\kl{pG}
- p \nabla_{x} G$  in the first term  followed by an application of the
 divergence theorem, and using the relation $\nabla_{x} G\kl{x_0-x, t}  = - \nabla_{x_0} G\kl{x_0-x, t} $  yield
\begin{multline} \label{eq:aux1}
 f \kl{x_0}
=
\int_{\Om}
\int_{\R}
 \Delta_{x} \kl{p \kl{x,t}
 G\kl{x_0-x, t} }
\rmd t\rmd x
\\ +
2 \, \nabla_{x_0} \cdot
\int_{\partial \Om}\nu_x
\int_{\R}
p \kl{x,t}
G\kl{x_0-x, t}
\rmd t
\rmd S\kl{x}
\,.
\end{multline}
According to the definition of  $\ubp$,
the second term equals $\kl{\ubp \wave f} \kl{x_0}$.
After  inserting the  representation \req{sol-green} for
the solution $p = \wave f$  of  the initial value problem \req{wave} and applying the relation  $\nabla_{x} G\kl{x_i-x, t} = -
\nabla_{x_i} G\kl{x_i-x, t} $ with $i=0,1$,   the first  term in \req{aux1}
is seen to take the form
$\int_{\Om} k_\Om^{(2)} \skl{x_0,x_1} f\kl{x_1}   \rmd x_1$, with
$k_\Om^{(2)}$ defined by Equation \req{kern-a}. This concludes the proof of
Theorem~\ref{thm:ubp}.
\end{proof}

\section{Inversion in even dimension}
\label{sec:even}

Now  let $n \geq 2$ denote an even natural number.
In  this section we derive explicit formulas for the
wave inversion in even dimension and then apply these results
for  establishing Theorem \ref{thm:even}.
In even dimension, the   outgoing fundamental solution of
the wave equation  takes the following  explicit form
 \begin{equation} \label{eq:green-even}
 	G\kl{x, t} =
	\begin{cases}
 	\frac{1}{2 \pi^{n/2}}\,
 	\DD_t ^{\kl{n-2}/2}  \;
	\frac{\chi\set { t^2- \sabs{x}^2 >0 } }
	{\sqrt{t^2- \sabs{x}^2}}
	& \text{ on }  \set{t >0}   \\
	0 & \text{ on }   \set{t < 0}
 \end{cases} \,,
\end{equation}
where $\DD_t  = \kl{2t}^{-1} \partial_t$ denotes differentiation
with respect to $t^2$, and $\chi\set { t^2- \sabs{x}^2 >0 }$ is the characteristic function of the set all points $\kl{x, t} \in \R^n \times \R$
with $t^2- \sabs{x}^2 >0$.  We emphasize again that in
\req{green-even} and in similar situations all derivatives are understood as distributional derivatives.

\smallskip
We now have the  following result for recovering the initial data
of the initial value problem  \req{wave} from the restriction  of its
solution  to  $\partial \Om \times \kl{0, \infty}$.

\begin{theorem}[Wave inversion in even dimension]\label{thm:wave-even}
Let $n\geq 2$ be an  even natural number, let
$\Om \subset \R^n $ be a bounded convex domain with smooth boundary,
and let $f \colon \R^n \to \R$ be a $C^\infty$ function that is supported inside $\Om$.
Then, for every $x_0 \in \Om$,
\begin{align}\nonumber
	f\kl{x_0}
	&=
	   \kl{\K_\Om  f}\kl{x_0} + \frac{\kl{-1}^{\kl{n-2}/2}}{\pi^{n/2}} \; \times
	 \\ \label{eq:wave-even-a}
           &
            \hspace{0.17\textwidth} 	
	  \,
	\nabla_{x_0} \cdot \int_{\partial \Om}
	\nu_x
	\int_{\abs{x_0-x}}^\infty
	\frac{\mkl{t \DD_t^{\kl{n-2}/2}t^{-1}\wave f}\kl{x,t} \, \rmd t}
	{ \sqrt{t^2 - \abs{x_0-x}^2} }
	 \, \rmd S\kl{x}  \,,
\\[0.4em] \nonumber
	 f\kl{x_0}
	 &=
	 \kl{\K_\Om  f}\kl{x_0}
	 +
	 \frac{\kl{-1}^{\kl{n-2}/2}}{\pi^{n/2}}  \; \times
	 \\ \label{eq:wave-even-b}
           &
            \hspace{0.12\textwidth} 	
	\int_{\partial \Om}
	\nu_x \cdot \kl{x_0-x}
	\int_{\abs{x_0-x}}^\infty
	\frac{\mkl{\partial_{t} \DD_t^{\kl{n-2}/2}t^{-1}\wave f} \kl{x,t} \, \rmd t}
	{ \sqrt{t^2 - \abs{x_0-x}^2} }
	 \, \rmd S\kl{x}
	 \,.
\end{align}
Here $\K_\Om$, $\nu_x$, $\nabla_{x_0}$,  and $\rmd S$  are as in
Theorem~\ref{thm:even}.
\end{theorem}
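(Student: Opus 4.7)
My plan is to combine the universal back\-projection identity of Theorem~\ref{thm:ubp} with the explicit form~\req{green-even} of the even-dimensional outgoing fundamental solution, and then to transfer the time derivatives from the Green's function onto $\wave f$ by a one-dimensional integration by parts. Writing $r\coloneqq \abs{x-x_0}$ and substituting~\req{green-even} into~\req{ubp} expresses $\kl{\ubp \wave f}\kl{x_0}$ as
\begin{equation*}
\frac{1}{\pi^{n/2}}\,\nabla_{x_0}\cdot\int_{\partial\Om}\nu_x\int_0^\infty \DD_t^{(n-2)/2}\!\!\left[\tfrac{\chi\set{t>r}}{\sqrt{t^2-r^2}}\right]\wave f\kl{x,t}\,\rmd t\,\rmd S\kl{x}.
\end{equation*}
The key technical identity I would then establish is
\begin{equation*}
\int_0^\infty \kl{\DD_t^k\phi}\psi\,\rmd t = \kl{-1}^k \int_0^\infty \phi \cdot t\DD_t^k\kl{t^{-1}\psi}\,\rmd t,
\end{equation*}
which follows from the observation that the formal adjoint of $\DD_t$ with respect to Lebesgue measure on $\kl{0,\infty}$ is $-t\DD_t t^{-1}$, together with the commutation relation $\kl{t\DD_t t^{-1}}^k = t\DD_t^k t^{-1}$. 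The relevant boundary terms vanish because $\wave f\kl{x,\cdot}$ has compact support in time for each $x\in\partial\Om$ (finite speed of propagation together with the support assumption on $f$), and because $1/\sqrt{t^2-r^2}$ is integrable across $t=r$. Applying this identity with $k=\kl{n-2}/2$ produces~\req{wave-even-a}.

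To pass from~\req{wave-even-a} to~\req{wave-even-b} I would move $\nabla_{x_0}$ inside the surface and line integrals. After the substitution $u = t^2 - r^2$ (which removes the apparent singularity of the integrand at $t=r$ and makes the differentiation under the integral sign transparent), the only $x_0$-dependence is through $r^2$, and using $\nabla_{x_0} r^2 = -2\kl{x-x_0}$ one obtains, on changing variables back,
\begin{equation*}
\nabla_{x_0}\int_r^\infty \frac{t\,h\kl{x,t}}{\sqrt{t^2-r^2}}\,\rmd t = \kl{x_0-x}\int_r^\infty \frac{\partial_t h\kl{x,t}}{\sqrt{t^2-r^2}}\,\rmd t,
\end{equation*}
with $h\coloneqq \DD_t^{(n-2)/2}\kl{t^{-1}\wave f}$. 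Dotting with $\nu_x$ inside the surface integral then gives~\req{wave-even-b}.

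The hardest step is the identification of the residual term $\int_\Om k_\Om^{(2)}\kl{x_0,x_1} f\kl{x_1}\,\rmd x_1$ from Theorem~\ref{thm:ubp} with $\kl{\K_\Om f}\kl{x_0}$ as defined by~\req{K},~\req{kern}. To this end I would evaluate $\int_\R \partial_t G\kl{x-x_1,t}\,G\kl{x-x_0,t}\,\rmd t$ distributionally using~\req{green-even}: by translation invariance the resulting object depends only on $x - \kl{x_0+x_1}/2$, and the light-cone structure of the even-dimensional fundamental solution forces the resulting distribution in $x$ to localise along the midplane $H_\star\kl{x_0,x_1}$. Integration against $\chi_\Om$ in $x$ then produces $\radon\chi_\Om$ evaluated at $\kl{\omega_\star,s_\star}$. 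The outer operator $\kl{\nabla_{x_0}+\nabla_{x_1}}^2$ in~\req{kern-a} contributes, together with the $\kl{n-2}/2$ time derivatives already present in $G$, the $n$-fold derivative $\partial_s^n$ in~\req{kern}; and the Hilbert transform $\hilbert_s$ arises because, in even dimensions, the product of the two reciprocal square-root half-space singularities of the two Green's functions combines, after time integration, into a principal-value kernel in the normal coordinate $s$ of the midplane. The main obstacle is keeping track of constants and distributional pairings, but the geometric naturality of the midplane and oriented distance $s_\star$ makes~\req{kern} the expected outcome.
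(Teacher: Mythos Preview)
Your overall strategy matches the paper's: apply Theorem~\ref{thm:ubp}, rewrite $\ubp\wave f$ via the adjoint relation $\kl{\DD_t^\nu}^* = (-1)^\nu t\DD_t^\nu t^{-1}$, and identify $k_\Om^{(2)}$ with $k_\Om$ by explicitly evaluating the time integral of the product of Green's functions. The adjoint computation and the passage from \req{wave-even-a} to \req{wave-even-b} are handled exactly as in the paper.

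There is, however, one concrete error. You justify the vanishing of boundary terms in the $\DD_t$ integration by parts by asserting that $\wave f\kl{x,\cdot}$ has compact support in time ``by finite speed of propagation.'' This is false in even dimensions: the strong Huygens principle fails, and $\wave f\kl{x,t}$ is generically nonzero for all $t$ past the arrival time, decaying only algebraically as $t\to\infty$. The boundary terms at $t\to\infty$ do vanish, but because of this decay combined with the decay of $\kl{t^2-r^2}^{-1/2}$ and its $\DD_t$-derivatives, not because of compact support. The paper itself notes (just after \req{ubp}) that decay, not compact support, is what makes $\ubp$ well defined in even dimension.

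A second, smaller point: your description of the kernel computation says the time-integrated product of Green's functions ``localises along the midplane $H_\star\kl{x_0,x_1}$.'' That is the \emph{odd}-dimensional picture, where one obtains $\delta^{(n-2)}\kl{R_0^2-R_1^2}$. In even dimension the paper computes the time integral explicitly (via the elementary integral \req{int-even}) and obtains the principal-value distribution $\Phi^{(n-2)}\kl{R_0^2-R_1^2}$, which is singular on the midplane but not supported there. Your subsequent remark that the Hilbert transform arises from the product of two reciprocal-square-root singularities is the correct mechanism; just be aware that the intermediate object is a principal value, not a localised distribution, and that the paper gets there by a direct closed-form evaluation of the $t$-integral rather than by a structural argument.
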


We proceed this section by first deriving
Theorem~\ref{thm:wave-even} and then establishing  the corresponding
result for the inversion from spherical means
in even dimensions (namely Theorem \ref{thm:even}).

\subsection{Proof of Theorem~\ref{thm:wave-even}}

According to Theorem~\ref{thm:ubp} we have to show that
the kernel $k^{(2)}_\Om$ defined in \req{kern-a} is equal to the kernel
$k_\Om$ defined in \req{kern}, and that
$\kl{\ubp \wave f} \kl{x_0}$ can be written as  the integral term in
\req{wave-even-a} as well as the one in \req{wave-even-b}.

Let us start by  showing that $ k_\Om^{(2)} = k_\Om$,
that is,
\begin{multline} \label{eq:kern-even}
\kl{\nabla_{x_0}+\nabla_{x_1}}^2
 \int_{\Om} \int_{\R}\kl{\partial_t G\kl{x_1-x, t}} G\kl{x_0-x, t}
 \rmd t \rmd x
\\
=
\frac{\kl{-1}^{\skl{n-2}/2}}{2^{n+1}\pi^{n-1}
\abs{x_1-x_0}^{n-1}}
\kl{\partial_s^n \hilbert_s  \radon \chi_\Om}
\kl{\omega_\star , s_\star} \,,
\end{multline}
where   $\omega_\star =  \omega_\star\kl{x_0,x_1} = \frac{x_1-x_0}{\abs{x_1-x_0}}$ and
$s_\star = s_\star\kl{x_0,x_1} = \frac{\abs{x_1}^2 - \abs{x_0}^2}{2\abs{x_1-x_0}}$  are as in \req{nr}.

To show \req{kern-even},  for any two given points $x_0 \neq x_1 \in \Om$,
we write $R_0 \coloneqq \abs{x_0-x}$ and $R_1 \coloneqq \abs{x_1-x}$.
Then, using the explicit expression \req{green-even}
for the fundamental solution  of the wave equation in
even dimensions,  the inner integral on the left hand side of
 \req{kern-even} evaluates to
\begin{multline*}
\int_{\R}\kl{\partial_t G\kl{x_1-x, t}} G\kl{x_0-x, t} \rmd t
\\
\begin{aligned}
&=
\frac{1}{4 \pi^n}
\int_0^{\infty}
\kl{
\partial_t \DD_t^{\skl{n-2}/2}  \frac{\chi\set{t^2-R_1^2>0}}
{\sqrt{t^2-R_1^2}} }
\DD_t^{\skl{n-2}/2}  \frac{\chi\set{t^2-R_0^2>0}}
{\sqrt{t^2-R_0^2}} \, \rmd t
\\
&=
\frac{1}{4 \pi^n}
\int_0^{\infty}
\kl{\partial_t \DD_{R_1}^{\skl{n-2}/2}  \frac{\chi\set{t^2-R_1^2>0}}
{\sqrt{t^2-R_1^2}}
}
\DD_{R_0}^{\skl{n-2}/2}  \frac{\chi\set{t^2-R_0^2>0}}
{\sqrt{t^2-R_0^2}} \, \rmd t
\\
&=
\frac{1}{4 \pi^n}
\DD_{R_1}^{\skl{n-2}/2}
\DD_{R_0}^{\skl{n-2}/2}
\int_0^{\infty}
\kl{\partial_t   \frac{\chi\set{t^2-R_1^2>0}}
{\sqrt{t^2-R_1^2}}
}
\frac{\chi\set{t^2-R_0^2>0}}
{\sqrt{t^2-R_0^2}} \, \rmd t
\\
&=
 -
 \frac{2}{4 \pi^n}
\DD_{R_1}^{\skl{n-2}/2}
\DD_{R_0}^{\skl{n-2}/2} \;
\lim_{T\to \infty}
\DD_{R_1}
\int_{\max \set{R_0, R_1}}^{T}
\frac{t \, \rmd t}{\sqrt{t^2-R_1^2} \sqrt{t^2-R_0^2}} \,.
\end{aligned}
\end{multline*}
For $T\geq \max\set{R_0, R_1}$, the  above integral on the right hand side
computes to
\begin{multline} \label{eq:int-even}
\int_{\max\set{R_0, R_1}}^T
 \frac{t \, \rmd t}{\sqrt{t^2 - R_1^2}\sqrt{t^2 - R_0^2}}
\\
=
\ln \kl{\sqrt{T^2 - R_0^2} + \sqrt{T^2 - R_1^2}}
- \frac{1}{2}  \ln  \kl{ \abs{R_0^2 - R_1^2} }  \,.
\end{multline}
After applying the   operator  $\DD_{R_1}$ and letting
$T\to \infty$,  the first term vanishes.
Now let $\Phi \kl{s}  = 1/s$ denote the  principal value distribution
$ \varphi \mapsto \lim_{\eps \downarrow 0} \int_{\R \setminus \kl{-\eps, \eps}}
\varphi\skl{s}s^{-1}\rmd s$.
Recalling the definitions of $R_0$ and $R_1$ then implies
\begin{multline} \label{eq:aux-even}
\int_{\R}\kl{\partial_t G\kl{x_1-x, t}} G\kl{x_0-x, t} \rmd t
=
-\frac{1}{4 \pi^n} \,
\DD_{R_1}^{\skl{n-2}/2}
\DD_{R_0}^{\skl{n-2}/2}
\frac{1}{R_0^2 - R_1^2}
\\
=
\frac{\kl{-1}^{n/2}}{4 \pi^n}
\, \Phi^{\skl{n-2}} \kl{ \abs{x_0-x}^2 - \abs{x_1-x}^2}
\,.
\end{multline}
Here and in the following   $\Phi^{\skl{\nu}}$ denotes the $\nu$-th distributional
derivative of $\Phi$ for some integer number $\nu \geq 0$.

For the following recall that
$\omega_\star = \frac{x_1-x_0}{\abs{x_1-x_0}}$ and
$s_\star = \frac{\abs{x_1}^2 - \abs{x_0}^2}{2\abs{x_1-x_0}}$ and write any
point $x \in \R^n$ in the form  $x =  s \omega_\star  + y$ with $s \in \R$
and  $y \bot \omega_\star$.  We then can compute
\begin{multline} \label{eq:diffx}
\abs{x_0-x}^2 - \abs{x_1-x}^2
=
\abs{x_0}^2 -\abs{x_1}^2  + 2 x \cdot\kl{x_1-x_0}
\\
=
2 \kl{x_1-x_0} \cdot  \kl{  x - \frac{x_1 + x_0}{2} }
=
2 \abs{x_1-x_0} \kl{ s - s_\star } \,.
\end{multline}
Together with Equation~\req{aux-even} and  the definition of the Radon
transform  $\radon$, this further implies
\begin{multline*}
 \int_{\Om} \int_{\R}\kl{\partial_t G\kl{x_1-x, t}} G\kl{x_0-x, t}
 \rmd t \rmd x
\\
\begin{aligned}
& =
\frac{\kl{-1}^{n/2}}{4 \pi^n}
\int_{\R}
\int_{\omega_\star^\bot}
\chi_\Om\kl{s \omega_\star + y }
\Phi^{(n-2)} \kl{ 2 \abs{x_1-x_0} \kl{ s - s_\star }}
\, \rmd y \, \rmd s
\\
&=
\frac{\kl{-1}^{n/2}}{4 \pi^n}
\int_{\R}
\Phi^{(n-2)} \kl{ 2 \abs{x_1-x_0} \kl{ s - s_\star }}
\kl{\int_{\omega_\star^\bot}
\chi_\Om\kl{s \omega_\star + y }
\, \rmd{y} }  \rmd s
\\
&=
\frac{\kl{-1}^{n/2}}{4 \pi^n}
\int_{\R}
\Phi^{(n-2)} \kl{ 2 \abs{x_1-x_0} \kl{ s - s_\star }}
\kl{\radon \chi_\Om}\kl{\omega_\star , s} \rmd s
\,.
\end{aligned}
\end{multline*}

Now using the chain rule,
integrating by parts $n-2$ times,
recalling the definition of the principal value distribution $\Phi \kl{s} = 1/s$, and
noting that the Hilbert transform $\hilbert_s$ is defined as the convolution with $\pi^{-1} \Phi$
imply
\begin{multline*}
 \int_{\Om} \int_{\R}\kl{\partial_t G\kl{x_1-x, t}} G\kl{x_0-x, t}
 \rmd t \rmd x
\\
\begin{aligned}
&=
\frac{\kl{-1}^{n/2}}{4 \pi^n 2^{n-2} \abs{x_1-x_0}^{n-2}}
\int_{\R}
\kl{\partial_s^{n-2}
\Phi \kl{ 2 \abs{x_1-x_0} \kl{ s - s_\star }} }
\kl{\radon \chi_\Om}\kl{\omega_\star , s} \rmd s
\\
&=
\frac{\kl{-1}^{n/2}}{\pi^n 2^{n} \abs{x_1-x_0}^{n-2}}
\int_{\R}
\frac{1}{2 \abs{x_1-x_0} \kl{ s - s_\star }}
\kl{\partial_s^{n-2}  \radon \chi_\Om}\kl{\omega_\star , s} \rmd s
\\
&=
\frac{\kl{-1}^{\kl{n-2}/2}}{2^{n+1} \pi^n  \abs{x_1-x_0}^{n-1}}
\int_{\R}
\frac{1}{s_\star - s }
\kl{\partial_s^{n-2}  \radon \chi_\Om}\kl{\omega_\star , s} \rmd s
\\
&=
\frac{\kl{-1}^{\skl{n-2}/2}}{2^{n+1}\pi^{n-1}
\abs{x_1-x_0}^{n-1}}
\kl{\partial_s^{n-2} \hilbert_s \radon \chi_\Om}
\kl{\omega_\star , s_\star} \,.
\end{aligned}
\end{multline*}

It remains to apply the operator $\kl{\nabla_{x_0}+\nabla_{x_1}}^2$
to the last expression. To that end, notice that due to symmetry
$\nabla_{x_0}+\nabla_{x_1}$ applied to any distribution  only depending on  $x_0-x_1$ vanishes,
and that $\kl{\nabla_{x_0}+\nabla_{x_1}} s_\star =  \kl{x_1-x_0}/\abs{x_1-x_0}$.
This implies
\begin{multline*}
\kl{\nabla_{x_0}+\nabla_{x_1}}^2
 \int_{\Om} \int_{\R}\kl{\partial_t G\kl{x_1-x, t}} G\kl{x_0-x, t}
 \rmd t \rmd x
\\
= \frac{\kl{-1}^{\skl{n-2}/2}}{2^{n+1}\pi^{n-1}
\abs{x_1-x_0}^{n-1}}
\kl{\partial_s^n \hilbert_s  \radon \chi_\Om}
\kl{\omega_\star , s_\star} \,,
\end{multline*}
which is the equality claimed in \req{kern-even}.

Now recall the definition of $\ubp$  (see Equation~\req{ubp})
as well as the explicit representation \req{green-even} for the fundamental solution
of the wave equation  in even dimension. Further notice that
for any  integer $\nu$, the formal $L^2$ adjoint of  $\DD_t^{\nu}$
is given by $\kl{\DD_t^{\nu}}^* = \kl{-1}^{\nu}  t \DD_t^{\nu} t^{-1}$.
We therefore  can compute
\begin{multline*}
\kl{\ubp\wave f} \kl{x_0}
\\
\begin{aligned}
& =
\frac{1}{\pi^{n/2}}  \;
	\nabla_{x_0} \cdot \int_{\partial \Om}
	\nu_x
	\int_{0}^\infty
	\DD_t^{\kl{n-2}/2} \kl{\frac{\chi\set{t^2 >  \sabs{x_0-x}^2}}{\sqrt{t^2 - \abs{x_0-x}^2}}}
	\wave f \kl{x, t}
	 \, \rmd t \, \rmd S\kl{x}
\\
& =
\frac{\kl{-1}^{\kl{n-2}/2}}{\pi^{n/2}}  \;
	\nabla_{x_0} \cdot \int_{\partial \Om}
	\nu_x
	\int_{0}^\infty
	\frac{\chi\set{t^2 >  \sabs{x_0-x}^2}}{\sqrt{t^2 - \abs{x_0-x}^2}}
	\kl{t \DD_t^{\kl{n-2}/2}  t^{-1}
	\wave f }\kl{x, t}
	 \, \rmd t \, \rmd S\kl{x} \\
& =
\frac{\kl{-1}^{\kl{n-2}/2}}{\pi^{n/2}} \;
	\nabla_{x_0} \cdot \int_{\partial \Om}
	\nu_x
	\int_{\abs{x_0-x}}^\infty
	\frac{\skl{t \DD_t^{\kl{n-2}/2}
	t^{-1}
	\wave f } \kl{x, t}}
	{\sqrt{t^2 - \abs{x_0-x}^2}}
	 \, \rmd t \, \rmd S\kl{x}  \,.
\end{aligned}
\end{multline*}
In fact, the second equality follows from repeated integration by parts.
The boundary terms at $\infty$ vanish since, due to the compact support of
$f$,  all derivatives of  $\wave f \kl{x,t}$ tend to zero as $t \to \infty$
(uniformly with respect to  $x$).
In view of Theorem~\ref{thm:ubp} and
Equation \req{kern-even}, the above expression for
$\kl{\ubp\wave f} \kl{x_0}$  yields the first identity in Theorem
\ref{thm:wave-even}, formula~\req{wave-even-a}.

Finally, we verify the second identity in Theorem
\ref{thm:wave-even}.   Interchanging  the order of differentiation in the
last displayed expression for $\kl{\ubp\wave f} \kl{x_0}$ followed by one
integration by parts yields
\begin{multline*}
\kl{-1}^{\kl{n-2}/2}\pi^{-n/2}
\kl{\ubp\wave f} \kl{x_0}
\\
\begin{aligned}
& =
-	 \int_{\partial \Om}
	\nu_x \cdot \kl{x_0-x}
	\int_{0}^\infty
	\partial_t\kl{\frac{ \chi \set{t^2  > \sabs{x_0-x}^2}}
	{ \sqrt{t^2 - \abs{x_0-x}^2} }}
	 \DD_t^{\kl{n-2}/2}  t^{-1}
	\wave f \kl{x, t}
	 \rmd t \rmd S\kl{x}
\\ &=
\int_{\partial \Om}
	\nu_x \cdot \kl{x_0-x}
	\int_{\sabs{x_0-x}}^\infty
	\frac{  \skl{\partial_t\DD_t^{\kl{n-2}/2}  t^{-1}
	\wave f} \kl{x, t} }
	{ \sqrt{t^2 - \abs{x_0-x}^2} }
	 \, \rmd t \, \rmd S\kl{x}
	 \,.
\end{aligned}
\end{multline*}
This shows equation \req{wave-even-b} and
concludes the proof of Theorem~\ref{thm:wave-even}.

\subsection{Proof of  Theorem \ref{thm:even}}

Recall the formula \req{sol-green} for the solution of
the wave equation \req{wave} as well as  the explicit
expression \req{green-even} for the fundamental  solution of the wave equation in
even dimensions. After introducing polar coordinates around the center
$x \in \partial \Om$ we can write
\begin{align*}
\kl{\wave f} \kl{x, t }
&=
\int_{\Om} \kl{\partial_t G\kl{x_1-x, t} } f\kl{x_1} \rmd x_1
\\
&=
\frac{1}{2\pi^{n/2}}
\int_{\Om} \kl{\partial_t \DD_t^{\skl{n-2}/2}
\frac{\chi\set{t^2 -\sabs{x_1-x}^2 > 0}}
{\sqrt{t^2 -\abs{x_1-x}^2}}
}   f\kl{x_1} \rmd x_1
\\
&=
\frac{\om_{n-1}}{2\pi^{n/2}}
\int_{0}^\infty
r^{n-1}  \M f \kl{x, r}
\kl{\partial_t \DD_t^{\skl{n-2}/2}
\frac{\chi\set{t^2 -r^2 > 0}}
{\sqrt{t^2 -r^2}}
}
\rmd r
\,.
\end{align*}

After multiplying  the last displayed equation with $G\kl{x_0-x, t}$,
integrating over  the time variable  and
using the shorthand notation  $R_0  =  \sabs{x_0-x}$ we obtain
\begin{multline*}
 \int_{\R}
 	G\kl{x_0-x, t}
 	\wave f \kl{x, t} \rmd t
	\\
	\begin{aligned}
	& =
\frac{\om_{n-1}}{4\pi^{n}}
\int_0^\infty
\kl{
\DD_t^{\skl{n-2}/2}
\frac{\chi\set{t^2 > R_0^2 }}
{\sqrt{t^2 -R_0^2}}} \, \times
\\& \hspace{0.18\textwidth}
\int_{0}^\infty
r^{n-1}  \M f \kl{x, r}
\kl{ \partial_t \DD_t^{\skl{n-2}/2}
\frac{\chi\set{t^2>r^2 }}
{\sqrt{t^2 -r^2}}
} \rmd r \rmd t
\\[0.2em]
& =
\frac{\om_{n-1}}{4\pi^{n}}
\int_{0}^\infty
r^{n-1}  \M f \kl{x, r}\, \times
\\& \hspace{0.06\textwidth}
\int_0^\infty
\kl{ \DD_t^{\skl{n-2}/2}
\frac{\chi\set{t^2> R_0^2}}
{\sqrt{t^2 -R_0^2}}}
\kl{
\partial_t \DD_t^{\skl{n-2}/2}
\frac{\chi\set{t^2  >r^2 }}
{\sqrt{t^2 -r^2}}
} \rmd t\rmd r
\\[0.2em]
& =
-\frac{\om_{n-1}}{4\pi^{n}}
\int_{0}^\infty
r^{n-1}  \M f \kl{x, r}
\DD_{R_0}^{\skl{n-2}/2}
\DD_{r}^{\skl{n-2}/2}
\, \times
\\
& \hspace{0.26\textwidth}
\kl{
\DD_{r}
\lim_{T \to \infty}\int_{\max\set{R_0,r}}^T
\frac{ 2 t \rmd t}
{\sqrt{t^2 -R_0^2} \sqrt{t^2 -r^2}}
} \rmd r \,.
\end{aligned}
\end{multline*}
The inner integral has already been computed
(see Equation \req{int-even}) and shows
\begin{equation*}
\DD_{r}
\lim_{T \to \infty}\int_{\max\set{R_0,r}}^T
\frac{ 2 t \rmd t}
{\sqrt{t^2 -R_0^2} \sqrt{t^2 -r^2}}
=- \frac{1}{r^2 - R_0^2}
= -\Phi \kl{r^2 - R_0^2}  \,,
\end{equation*}
with $\Phi$ denoting the principal value distribution $\mathrm{P.V.}~1/s$.
After recalling that $\DD_r$ denotes differentiation with  respect to $r^2$ and that
the formal $L^2$ adjoint of  $\DD_r^{\nu}$ is given by $\kl{\DD_r^{\nu}}^* =
\kl{-1}^{\nu}  r \DD_r^{\nu} r^{-1}$ we obtain
\begin{multline*}
 \int_{\R}
 	G\kl{x_0-x, t}
 	\wave f \kl{x, t} \rmd t
	\\
	\begin{aligned}
	&=
	\frac{\kl{-1}^{\skl{n-2}/2} \om_{n-1}}
	{4\pi^{n}}
	\int_{0}^\infty
	r^{n-1}
	\M f \kl{x, r}
	\DD_{r}^{n-2} \Phi\kl{ r^2 - \abs{x_0-x}^2 } \rmd r
	\\
	&=
	\frac{\kl{-1}^{\skl{n-2}/2} \om_{n-1}}
	{4\pi^{n}}
	\int_{0}^\infty
	\frac{\skl{r \DD_{r}^{n-2} r^{n-2}
	\M f }\kl{x, r} }{r^2 - \abs{x_0-x}^2}
	\; \rmd r \,.
\end{aligned}
\end{multline*}
Finally, by  using the definition of  $\ubp$ (see Equation~\req{ubp})
and inserting the identity just established we obtain
\begin{multline} \label{eq:wave-even-aux}
\kl{\ubp \wave f} \kl{x_0}
=
2  \, \nabla_{x_0} \cdot \int_{\partial \Om} \nu_x
 \int_{\R}
 G\kl{x_0-x, t}
 \wave f \kl{x, t} \rmd t \rmd S \kl{x}
\\
=
\frac{\kl{-1}^{\skl{n-2}/2} \om_{n-1}}
	{2\pi^{n}}
	\;
	\nabla_{x_0} \cdot \int_{\partial \Om} \nu_x
\int_{0}^\infty
	\frac{\skl{r \DD_{r}^{n-2} r^{n-2}
	\M f }\kl{x, r} }{r^2 - \abs{x_0-x}^2}  \, \rmd r \rmd S \kl{x}\,.
\end{multline}
According to Theorem~\ref{thm:ubp} and
Equation \req{kern-even} this shows the first
inversion formula~\req{even-a} in Theorem~\ref{thm:even}.

\smallskip
It remains to  verify  Equation \req{even-b}.
This equation, however, is an easy consequence  of the identity \req{even-a}
just established. In fact, interchanging the order of integration and
differentiation in \req{wave-even-aux}  and integrating by parts
yields
\begin{multline*}
\kl{ \ubp \wave f} \kl{x_0}
=
\frac{\kl{-1}^{\skl{n-2}/2} \om_{n-1}}
	{2\pi^{n}} \,
	\times
	 \\
	\int_{\partial \Om} \nu_x  \cdot \kl{x_0-x}
\int_{0}^\infty
	\frac{\skl{\partial_{r} \DD_{r}^{n-2} r^{n-2}
	\M f }\kl{x, r} }{r^2 - \abs{x_0-x}^2}   \, \rmd r \rmd S \kl{x} \,.
\end{multline*}
Again, according to Theorem~\ref{thm:ubp} and
Equation~\req{kern-even} the last displayed equation
implies~\req{even-b} and concludes the proof of Theorem~\ref{thm:even}.

\section{Inversion in odd dimension}
\label{sec:odd}

Now let $n \geq 3$ be an odd natural number.
In this case, the outgoing fundamental solution
of the wave equation is given
\begin{equation} \label{eq:green-odd}
G\kl{x, t}
=
\begin{cases}
\frac{1}{2 \pi^{\kl{n-1}/2}} \;  \DD_t^{\kl{n-3}/2}
\delta \kl{t^2 - \abs{x}^2 }
& \text{ on } \set{t > 0 }
\\
0 & \text{ on } \set{t < 0 }
\end{cases} \,.
\end{equation}
Here, as usual,  the operators
$\DD_t  = \kl{2t}^{-1} \partial_t$ denotes
the distributional derivative  with respect to the
variable $t^2$.

\smallskip
We have the following counterpart of
Theorem~\ref{thm:wave-even} for odd
dimensions.

\begin{theorem}[Wave inversion in odd dimension]\label{thm:wave-odd}
Let $n \geq 3$ be an odd natural number,
let $\Om \subset \R^n $ be a  bounded convex domain with smooth boundary,
and let $f \colon \R^n \to \R$ be a $C^\infty$ function that is supported inside $\Om$.

Then, for every $x_0 \in \Om$, we have
\begin{align}\nonumber
	f\kl{x_0}
	&=
	   \kl{\K_\Om  f}\kl{x_0}\;
	 \\
	 \label{eq:wave-odd-a}
           &
            +
	\frac{\kl{-1}^{\kl{n-3}/2}}{2\pi^{\kl{n-1}/2}} \,
	\nabla_{x_0} \cdot \int_{\partial \Om}
	\nu_x
	\, \kl{\DD_t^{\kl{n-3}/2}t^{-1}\wave f }
	\kl{x,\sabs{x_0-x}}
	 \, \rmd S\kl{x}  \,,
\\[0.4em] \nonumber
		f\kl{x_0}
	&=
	   \kl{\K_\Om  f}\kl{x_0}\;
	 \\ \label{eq:wave-odd-b}
           &
            +
	\frac{\kl{-1}^{\kl{n-3}/2}}{2\pi^{\kl{n-1}/2}}  \,
	\int_{\partial \Om}
	\nu_x \cdot \frac{x_0-x}{\sabs{x_0-x}}
	\, \kl{ \partial_t \DD_t^{\kl{n-3}/2}t^{-1}\wave f }
	\kl{x,\sabs{x_0-x}}
	 \, \rmd S\kl{x}
	 \,.
\end{align}
Here, again, $\K_\Om$, $\nu_x$, $\nabla_{x_0}$,  and $\rmd S$  are as in
Theorem~\ref{thm:even}.
\end{theorem}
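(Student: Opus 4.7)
The plan is to follow the same strategy used to establish Theorem~\ref{thm:wave-even}, namely to combine Theorem~\ref{thm:ubp} with an explicit computation of the distributional kernel $k_\Om^{(2)}$ and of the universal backprojection $\ubp\wave f$ — the only difference being that we now substitute the odd-dimensional Green's function \req{green-odd} in place of \req{green-even}. Specifically, I would establish (i) that the distributional kernel $k_\Om^{(2)}$ defined by \req{kern-a} coincides with the odd-dimensional kernel $k_\Om$ in \req{kern}, and (ii) that $(\ubp\wave f)(x_0)$ equals the boundary integral appearing on the right-hand side of \req{wave-odd-a}. Combining these with Theorem~\ref{thm:ubp} yields \req{wave-odd-a}, and \req{wave-odd-b} then follows by moving the outer gradient $\nabla_{x_0}$ inside the integral and using $\nabla_{x_0}|x-x_0| = (x_0-x)/|x-x_0|$.

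For step (i), I would use the identity $\DD_t^k \delta(t^2-R^2) = \delta^{(k)}(t^2-R^2)$ (verified by the chain rule) and the change of variables $u = t^2$ to evaluate the temporal integral $\int_\R (\partial_t G(x-x_1,t)) G(x-x_0,t)\,\rmd t$ formally as a multiple of $\delta^{(n-2)}\kl{|x-x_0|^2-|x-x_1|^2}$. Exactly as in the even-dimensional derivation, writing $x = s\omega_\star+y$ with $y\perp\omega_\star$ converts the argument of the delta distribution to $2|x_1-x_0|(s-s_\star)$, and integrating over $x\in\Om$ replaces the transverse integral of $\chi_\Om$ by a Radon transform $\radon\chi_\Om(\omega_\star,s)$. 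The homogeneity of $\delta^{(n-2)}$ produces the prefactor $|x_1-x_0|^{-(n-1)}$ together with $\partial_s^{n-2}\radon\chi_\Om(\omega_\star,s_\star)$. Finally, applying $(\nabla_{x_0}+\nabla_{x_1})^2$ and using that $(\nabla_{x_0}+\nabla_{x_1})\omega_\star = 0$, $(\nabla_{x_0}+\nabla_{x_1})s_\star = \omega_\star$, and $(\nabla_{x_0}+\nabla_{x_1})|x_1-x_0| = 0$ contributes the two remaining $\partial_s$ derivatives, yielding $\partial_s^n\radon\chi_\Om$ and matching the odd-dimensional expression for $k_\Om$ in \req{kern}.

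For step (ii), I would insert \req{green-odd} into the definition \req{ubp} of $\ubp\wave f$. The substitution $u = t^2$ recasts the temporal pairing $\int_0^\infty \DD_t^{(n-3)/2}\delta(t^2-|x-x_0|^2)\,\wave f(x,t)\,\rmd t$ as a distributional evaluation of $\partial_u^{(n-3)/2}\delta(u-|x-x_0|^2)$ against $\wave f(x,\sqrt u)/(2\sqrt u)$, which by integration by parts produces (up to an overall constant) $(\DD_t^{(n-3)/2}t^{-1}\wave f)(x,|x-x_0|)$. Pulling through the factor of $2$ and the divergence from \req{ubp} yields precisely the right-hand side of \req{wave-odd-a}.

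The routine part is bookkeeping the various constants; the main conceptual obstacle is the rigorous handling of the formal distributional product $(\partial_t G(x-x_1,t))\,G(x-x_0,t)$, which in odd dimensions is the product of two derivatives of delta distributions supported on distinct light cones. As discussed in the remark following Theorem~\ref{thm:ubp}, the present article adopts the convention of manipulating such products formally as in~\cite{FinPatRak04,Nat12}; a fully rigorous justification proceeds by pairing both sides of \req{inv-a} with a test function on $\Om$ and applying Fubini after the distributional identity $\int_\R(\partial_t G(x-x_1,t))G(x-x_0,t)\,\rmd t = \frac{(-1)^{(n-1)/2}}{4\pi^{n-1}}\,\delta^{(n-2)}\kl{|x-x_1|^2-|x-x_0|^2}$ has been established weakly against test functions in $x$.
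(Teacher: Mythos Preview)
Your proposal is correct and follows essentially the same route as the paper's own proof: both start from Theorem~\ref{thm:ubp}, evaluate the temporal integral $\int_\R (\partial_t G(x-x_1,t))G(x-x_0,t)\,\rmd t$ to obtain a multiple of $\delta^{(n-2)}\bigl(|x-x_0|^2-|x-x_1|^2\bigr)$, convert the spatial integral over $\Om$ to a Radon transform via the decomposition $x=s\omega_\star+y$, and apply $(\nabla_{x_0}+\nabla_{x_1})^2$ to get $k_\Om$; then compute $\ubp\wave f$ by inserting \req{green-odd} and pass from \req{wave-odd-a} to \req{wave-odd-b} by differentiating under the integral. The only tactical difference is that you phrase the temporal computation via the substitution $u=t^2$, whereas the paper trades $\DD_t$ for $-\DD_{R_i}$ and rewrites $\delta(t^2-|x|^2)=\tfrac{1}{2}t^{-1}\delta(t-|x|)$ directly; these are equivalent manipulations.
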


We proceed with this section by first  establishing
Theorem~\ref{thm:wave-odd} and then deriving
the formulas in  Theorem~\ref{thm:odd} as corollaries
of it.

\subsection{Proof of Theorem~\ref{thm:wave-odd}}

Similar  to  the even dimensional case we apply Theorem~\ref{thm:ubp}
and verify  that the kernel $k^{(2)}_\Om$ defined in \req{kern-a} is  equal to the
kernel $k_\Om$ defined in   \req{kern},
and that  $\kl{\ubp \wave f} \kl{x_0}$ can be written as any of the integral terms in
Equations~\req{wave-odd-a} and \req{wave-odd-b}.

We first show that $k_\Om^{\kl{2}} = k_\Om$, that is,
\begin{multline} \label{eq:kern-odd}
\kl{\nabla_{x_0} + \nabla_{x_1}}^2
 \int_{\Om} \int_{\R}\kl{\partial_t G\kl{x_1-x, t}} G\kl{x_0-x, t}
 \rmd t \rmd x
\\
=
\frac{\kl{-1}^{\kl{n-1}/2}}{2^{n+1}\pi^{n-1}
\abs{x_1-x_0}^{n-1}}
\kl{\partial_s^{n}  \radon \chi_\Om}
\kl{\omega_\star , s_\star} \,,
\end{multline}
where   $\omega_\star =  \omega_\star\kl{x_0,x_1} = \frac{x_1-x_0}{\abs{x_1-x_0}}$ and
$s_\star = s_\star\kl{x_0,x_1} = \frac{\abs{x_1}^2 - \abs{x_0}^2}{2\abs{x_1-x_0}}$  are as in \req{nr}.

With the notation $R_0 \coloneqq \abs{x_0-x}$ and
$R_1 \coloneqq \abs{x_1-x}$,  the
representation~\req{green-odd} for the outgoing fundamental
solution of the wave equation in odd dimensions  yields
\begin{multline*}
\int_{\R} \kl{\partial_t G\kl{x_1-x, t}}  G\kl{x_0-x, t} \rmd t
\\
\begin{aligned}
&=
\frac{1}{4 \pi^{n-1}}
\int_{0}^\infty
\kl{ \partial_t \DD_t^{\skl{n-3}/2}  \delta\kl{t^2 - R_1^2} }
\DD_t^{\skl{n-3}/2}  \delta\kl{t^2 - R_0^2}
 \rmd t
\\
&=
\frac{2}{4 \pi^{n-1}}
\int_{0}^\infty
\kl{t \DD_t^{\skl{n-1}/2}  \delta\kl{t^2 - R_1^2}}
\DD_t^{\skl{n-3}/2}  \delta\kl{t^2 - R_0^2}
 \rmd t
\\
&=
 -
 \frac{2}{4 \pi^{n-1}}
\int_{0}^\infty
\kl{ t \DD_{R_1}^{\skl{n-1}/2}  \delta\kl{t^2 - R_1^2} }
\DD_{R_0}^{\skl{n-3}/2}  \delta\kl{t^2 - R_0^2}
 \rmd t
\\
&=
-
\frac{1}{4 \pi^{n-1}} \;
\DD_{R_1}^{\skl{n-1}/2}
\DD_{R_0}^{\skl{n-3}/2} \int_{0}^\infty
\delta\kl{t^2 - R_1^2}
\delta\kl{t^2 - R_0^2}
 \, 2 t \, \rmd t
\\
&=
-
 \frac{1}{4 \pi^{n-1}} \;
\DD_{R_1}^{\skl{n-1}/2}
\DD_{R_0}^{\skl{n-3}/2}
 \delta\kl{R_0^2 - R_1^2}
 \\
&=
\frac{\kl{-1}^{\skl{n-3}/2}}{4 \pi^{n-1}} \;
 \delta^{(n-2)}\kl{\abs{x_0-x}^2 -\abs{x_1-x}^2}
 \,.
\end{aligned}
\end{multline*}
Here and in the following   $\delta^{(\nu)}$ denotes the $\nu$-th derivative of the
one-dimensional  delta distribution for some integer number $\nu \geq 0$.

Now recall the definitions
$\omega_\star =   \frac{x_1-x_0}{\abs{x_1-x_0}}$ and
$s_\star =  \frac{\abs{x_1}^2 - \abs{x_0}^2}{2\abs{x_1-x_0}}$ and  write $x =  s \omega_\star  + y$ with $s \in \R$
and  $y \bot \omega$. We then have (see Equation~\req{diffx})
\begin{equation*}
\abs{x_0-x}^2 - \abs{x_1-x}^2
=
2 \abs{x_1-x_0} \kl{s-s_\star} \,.
\end{equation*}
This   implies
 \begin{multline*}
\int_{\Om}  \int_{\R}\kl{\partial_t G\kl{x_1-x, t}} G\kl{x_0-x, t}
\rmd t \rmd x
\\
\begin{aligned}
&=
 \frac{\kl{-1}^{\skl{n-3}/2}}{4 \pi^{n-1}} \;
\int_{\R}
\int_{\omega_\star^\bot}
\chi_\Om\kl{s \omega_\star + y }
\delta^{(n-2)} \kl{ 2 \abs{x_1-x_0} \kl{ s - s_\star }}
\rmd y \rmd s
\\
&=
\frac{\kl{-1}^{\kl{n-3}/2}}{4 \pi^{n-1}}
\int_{\R}
\delta^{(n-2)} \kl{ 2 \abs{x_1-x_0} \kl{ s - s_\star }}
\kl{\int_{\omega_\star^\bot}
\chi_\Om\kl{s \omega_\star + y }
\rmd y }\rmd s
\\
&=
\frac{\kl{-1}^{\kl{n-3}/2}}{4\pi^{n-1}}
\int_{\R}
\delta^{(n-2)} \kl{ 2 \abs{x_1-x_0} \kl{ s - s_\star }}
\kl{\radon \chi_\Om}\kl{\omega_\star , s} \rmd s
\,.
\end{aligned}
\end{multline*}
Integrating   $n-2$ times by parts yields
\begin{multline*}
\int_{\Om}  \int_{\R}\kl{\partial_t G\kl{x_1-x, t}} G\kl{x_0-x, t}
\rmd t \rmd x
\\
\begin{aligned}
&=
\frac{\kl{-1}^{\kl{n-1}/2}}{4 \pi^{n-1}}
\frac{1}{2^{n-2} \abs{x_1-x_0}^{n-2} }
\int_{\R}
\delta \kl{ 2 \abs{x_1-x_0} \kl{ s - s_\star }}
\kl{\partial_s^{n-2} \radon \chi_\Om}\kl{\omega_\star , s} \rmd s
\\
&=
\frac{\kl{-1}^{\kl{n-1}/2}}{2^{n+1}
\pi^{n-1}  \abs{x_1-x_0}^{n-1}}
\, \kl{\partial_s^{n-2} \radon \chi_\Om}\kl{\omega_\star ,  s_\star}
\,.
\end{aligned}
\end{multline*}
As in the even dimension case, after application
of $\kl{\nabla_{x_0}+\nabla_{x_1}}^2$  this yields \req{kern-odd}.

Next, note that the fundamental solution \req{green-odd}  in  odd dimensions may be rewritten in the form
$1/\skl{4\pi^{\skl{n-1}/2}} \DD_t^{\kl{n-3}/2}
t^{-1} \delta \kl{t - \abs{x}}$.  Consequently,  by  the definition of  $\ubp$ (see Equation~\req{ubp}),
we have
\begin{align*}
\kl{\ubp\wave f}\kl{x_0}
&=
2\, \nabla_{x_0} \cdot \int_{\partial \Om} \nu_x \int_{\R}
G \kl{x- x_0, t} \wave f \kl{x, t } \rmd t \rmd S \kl{x}
\\
&=
\frac{\kl{-1}^{\kl{n-3}/2}}{2\pi^{\kl{n-1}/2}} \;  \nabla_{x_0} \cdot
\int_{\partial \Om} \nu_x \,
\skl{\DD_t^{\kl{n-3}/2}
t^{-1}
\wave f} \kl{x, \abs{x_0-x} } \rmd S \kl{x} \,.
\end{align*}
In view of Theorem~\ref{thm:ubp}  and due to
Equation~\req{kern-odd} this implies  formula~\req{wave-odd-a}
claimed in Theorem \ref{thm:wave-odd}.
Finally, carrying out  the  differentiation under the integral
yields
\begin{multline*}
\kl{\ubp\wave f}\kl{x_0}
\\=
\frac{\kl{-1}^{\kl{n-3}/2}}{2\pi^{\kl{n-1}/2}} \;
\int_{\partial \Om} \nu_x  \cdot \frac{x_0-x}{\sabs{x_0-x}}
\kl{\partial_t \DD_t^{\kl{n-3}/2}
t^{-1} \wave f } \kl{x, \abs{x_0-x} }
\rmd S \kl{x} \,.
\end{multline*}
This shows that also identity \req{wave-odd-b} holds and
concludes the proof of Theorem \ref{thm:wave-odd}.

\subsection{Proof of Theorem~\ref{thm:odd}}

Inserting the expression \req{green-odd} for the fundamental solution of the wave equation in odd
dimensions (see Equation \req{sol-green}) shows that the solution of  the wave equation~\req{wave}
can be written as
\begin{equation*}
\wave f \kl{x, r}
=
\frac{\om_{n-1}}{4 \pi^{\kl{n-1}/2}} \;
\partial_{r} \DD_r^{\kl{n-3}/2}
r^{n-2} \M f \kl{x, r} \,.
\end{equation*}
Together with the definition of  $\ubp$ (see Equation \req{ubp}) this yields
\begin{multline*}
\kl{\ubp \wave f}  \kl{x_0}
\\
\begin{aligned}
&=
\frac{1}{2 \pi^{\kl{n-1}/2}} \;
\nabla_{x_0} \cdot \int_{\partial \Om}
\nu_x
\kl{ \DD_r^{\kl{n-3}/2} r^{-1} \wave f} \kl{x, \abs{x_0-x}} \rmd S\kl{x}
\\
&=
\frac{\om_{n-1}}{8 \pi^{n-1}} \;
\nabla_{x_0} \cdot \int_{\partial \Om}
\nu_x
\kl{ \DD_r^{\kl{n-3}/2} r^{-1}\partial_{r} \DD_r^{\kl{n-3}/2}
r^{n-2} \M f } \kl{x, \abs{x_0-x}} \rmd S\kl{x}
\\
&=
\frac{\om_{n-1}}{4 \pi^{n-1}} \;
\nabla_{x_0} \cdot \int_{\partial \Om}
\nu_x
\kl{ \DD_r^{n-2}  r^{n-2} \M f }
\kl{x, \abs{x_0-x}} \rmd S\kl{x} \,.
\end{aligned}
\end{multline*}
According to  Theorem~\ref{thm:ubp}  and Equation~\req{kern-odd}
this yields~\req{odd-a}.

\smallskip
It remans to establish the second formula in Theorem \ref{thm:odd},
namely Equation ~\req{odd-b}.
To that end, one simply carries out the differentiation in the last displayed formula for
$\kl{\ubp \wave f } \kl{x_0}$, which yields
\begin{align*}
\kl{\ubp \wave f  } \kl{x_0}
&=
\frac{\om_{n-1}}{4 \pi^{n-1}} \;
\nabla_{x_0} \cdot \int_{\partial \Om}
\nu_x
\kl{ \DD_r^{n-2}  r^{n-2} \M f }
\kl{x, \abs{x_0-x}} \rmd S\kl{x}
\\
&=
\frac{\om_{n-1}}{4 \pi^{n-1}} \;
\int_{\partial \Om}
\nu_x \cdot \frac{x_0-x}{\sabs{x_0-x}}
\kl{ \partial_{r} \DD_r^{n-2}  r^{n-2} \M f }
\kl{x, \abs{x_0-x}} \rmd S\kl{x} \,.
\end{align*}
This however yields formula~\req{odd-b}.

\section{Exact inversion for elliptical domains}
\label{sec:ell}

Let $A = \diag \kl{a_1, \dots, a_n}$ be a diagonal
matrix in   $\R^{n \times n}$ with entries $a_j >0$
and consider the elliptical domain
\begin{equation*}
	\Om
	\coloneqq  \set{ x \in \R^n :
	\abs{ A^{-1} x }^2  < 1 } \,.
\end{equation*}
In order to establish the exact inversion formulas of Theorem~\ref{thm:ell}
it is sufficient  to show that $\K_\Om f = 0$.  This will be done by
first verifying that the kernel vanishes  for the special case that  the domain is a ball
and then applying  a linear transformation to the ball to establish the result for general case.

\subsection{Special case: spherical domains}
 \label{sec:ell-ball}

Let $B \coloneqq   \set{ x \in \R^n  :  \sabs{x} < 1 }$
denote the unit ball in $\R^n$ centered at the origin.
Then,  elementary geometry shows that the Radon transform
of $\chi_{B}$ is given by
\begin{equation} \label{eq:radon-ball}
\radon \chi_B \kl{\omega, s }
=
\begin{cases}
V_{n-1}  \kl{1 - s^2}^{\kl{n-1}/2} & \text{ if } \abs{s} < 1 \\
0 & \text{otherwise}
\end{cases}\,,
\end{equation}
where $V_{n-1} = \frac{\om_{n-2}}{n-1}$
denotes the volume of the unit ball in $\R^{n-1}$.

\subsubsection*{Odd  dimension}

If  $n$ is odd, then \req{radon-ball} shows
that $\radon \chi_B \kl{\omega, s }$ is a polynomial of degree $n-1$ on
$\set{\abs{s} < 1}$.
Since  $ \abs{s_\star\kl{x_0,x_1}} < 1$ for
any two distinct points  $x_0, x_1 \in B$,
this yields
\begin{equation*}
k_B \kl{x_0,x_1} =
\frac{\kl{-1}^{\kl{n-1}/2}}{2^{n+1}\pi^{n-1}}
	 \,
	 \frac{\kl{ \partial_s^n  \radon \chi_B}
	\kl{\omega_\star\kl{x_0,x_1},  s_\star\kl{x_0,x_1}}}
	{\abs{x_1-x_0}^{n-1}}
	= 0 \; \text{ for } x_1 \neq x_0 \in B \,.
\end{equation*}
This implies  that we also have $\K_B f = 0$ and, according to
Theorem~\ref{thm:odd}, this shows the inversion formulas~\req{inv-ell-odd-a}, \req{inv-ell-odd-b}
stated in Theorem~\ref{thm:ell} for the special case that  the considered
domain is a ball in odd dimension.

\subsubsection*{Even dimension}

If $n \geq 2$ is an even natural number, then
the identity $\K_B f = 0$   is slightly less obvious.
In this case, we first note the following identity (see, for example,  \cite[Table~7.3, Number~13]{Pou10})
\begin{equation}\label{eq:hp}
\kl{\hilbert_s \kl{ s  \phi }}\kl{\hat s} = \hat s  \kl{\hilbert_s \phi} \kl{\hat s} \ -
\frac{1}{\pi} \int_{\R} \phi \kl{s} \rmd s
\end{equation}
satisfied by the Hilbert transform and some function $\phi \colon \R \to \R$.
Further,   Equation~\req{radon-ball} shows that
we have the relation $\kl{\radon \chi_B} \kl{\omega, s }  =  P_{n-2} \kl{s}
\phi_{1/2} \kl{s} $, where $P_{n-2} \kl{s}$ is a polynomial of degree
$n-2$ and $\phi_{1/2}\kl{s} \coloneqq \sqrt{\max \set{0,  1 - s^2}}$.
Applying the identity   \req{hp} repeatedly, thus yields
\begin{equation*}
\kl{\hilbert_s \radon \chi_B } \kl{\omega, s }
=
Q_{n-2} \kl{s}  \kl{\hilbert_s \phi_{1/2}} \kl{s}
+
Q_{n-3} \kl{s} \,,
\end{equation*}
for certain polynomials  $Q_{n-2}$ and $Q_{n-3}$ of degree $n-2$ and $n-3$, respectively.  The Hilbert transform of
$\phi_{1/2}\kl{s} =  \sqrt{\max \set{0,  1 - s^2}}$ is  known and
given by (see, for example,  \cite[Table 13.11]{Bra00b})
\begin{equation*}
\kl{\hilbert_s  \phi_{1/2} } \kl{s}
=
s -
\operatorname{sign} \kl{s}
\chi \set{\abs{s} > 1} \sqrt{s^2-1}
\quad \text{ for all } s \in \R
\,.
\end{equation*}
In particular, $\kl{\hilbert_s  \phi_{1/2}}\kl{s}$  is a   linear function  on
$ \set{\abs{s} < 1}$ and therefore the product $Q_{n-2} \kl{s}  \skl{\hilbert_s \phi_{1/2}} \kl{s}$ is a polynomial of degree $n-1$
on $\set{\abs{s} < 1}$. Noting again that
$ \abs{s_\star\kl{x_0,x_1}} < 1$ for any two distinct points
$x_0, x_1 \in B$, we therefore conclude
\begin{equation*}
k_B \kl{x_0,x_1} =
	  \frac{\kl{-1}^{\kl{n-2}/2}}{2^{n+1} \pi^{n-1}}
	  \,
	  \frac{\kl{ \partial_s^n \hilbert_s  \radon \chi_B}
	  \kl{\omega_\star\kl{x_0,x_1},  s_\star\kl{x_0,x_1}}}
	  {  \abs{x_1-x_0}^{n-1}}
	  = 0 \; \text{ for } x_1 \neq x_0 \in B \,.
\end{equation*}
This implies   that the identity  $\K_B f = 0$ also holds in  even dimension.
In view of Theorem~\ref{thm:even}, this establishes
inversion formulas \req{inv-ell-even-a}, \req{inv-ell-even-b} in Theorem \ref{thm:ell}
for the special case that  the considered domain is a ball  in even dimension.

\subsection{General case: elliptical domains}

Now let $ \Om = \set{ x \in \R^n : \sabs{A^{-1}x} < 1}$ be an
elliptical domain where $A = \diag \kl{a_1, \dots, a_{n}}$ is a
diagonal matrix with positive entries that  possibly differ from
each other. We then obviously have the identity
$\chi_\Om  \kl{x} = \chi_B \kl{ A^{-1}x} $,
where  $B \subset \R^n$ is the unit ball considered in the previous subsection.
Therefore,  the known relation between the Radon transform of  a function $\varphi$ and the Radon transform of
the function $ x \mapsto \varphi \kl{ A^{-1}x }$ implies that
\begin{equation} \label{eq:rbo}
	\radon \chi_\Om \kl{\omega, s}
	=
	\frac{\det \kl{A}}{\abs{A \omega}} \;
	\radon \chi_B \kl{\frac{A \omega}{\abs{A \omega}}, \frac{s} {\abs{A \omega}}}
	\qquad  \text{ for all }
	\kl{\omega, s} \in S^{n-1} \times \R  \,,
 \end{equation}
From \req{rbo} we  conclude that
 \begin{equation*}
\begin{aligned}
 \kl{\partial_s^{n}
 \radon \chi_\Om} \kl{\omega, s}
 &=  \frac{\det \kl{A}}{\abs{A \omega}^{n+1}} \;
 \kl{\partial_s^{n}
 \radon \chi_B } \kl{\frac{A\omega}{\abs{A \omega}}, \frac{s} {\abs{A \omega}}}
&& \text{if  $n$ is odd} \,,
\\
\kl{ \partial_s^n \hilbert_s
 \radon \chi_\Om} \kl{\omega, s}
 &=  \frac{\det \kl{A}}{\abs{A \omega}^{n+1}} \;
 \kl{\partial_s^n \hilbert_s
 \radon \chi_B} \kl{\frac{A\omega}{\abs{A \omega}}, \frac{s} {\abs{A \omega}}}
&& \text{if  $n$ is even} \,.
 \end{aligned}
 \end{equation*}
According to the special case considered in Subsection \ref{sec:ell-ball}
this shows that  $k_\Om \kl{x_0,x_1} = 0$ for all $x_0 \neq x_1 \in \Om$ and hence
that $\K_\Om f = 0$.
In view of Theorems~\ref{thm:even} and \ref{thm:odd}, this   establishes
the exact  reconstruction formulas \req{inv-ell-even-a}, \req{inv-ell-even-b}, \req{inv-ell-odd-a} and \req{inv-ell-odd-b}
for the inversion of spherical means on elliptical domains in arbitrary spatial
dimension.

\section{Discussion}
\label{sec:discussion}

Many medical imaging  and remote sensing applications aim for recovering a function
from spherical means centered on a set of admissible receiver or detector locations.
In the case that the center set is an infinite hyperplane explicit formulas of the back-projection type for
recovering a function from spherical means are known since the mid 80s (see \cite{And88,Faw85}).
In the case that center set is a spherical or cylindrical surface such type of formulas have been
derived about 20 years later in~\cite{FinHalRak07,FinPatRak04,Kun07a,XuWan05}.
All these geometries have rotational and/or translational invariance and  seem well adapted
to the inversion from spherical means.
It therefore has been believed by many researchers that such exact back-projection type
inversion formulas may only exist for those invariant  geometries.

Very recently,  in \cite{Hal13a,Nat12} explicit exact inversion formulas of the
back-projection type for inverting the  spherical mean
transform with elliptical center sets in dimensions $n=2$ and $n=3$
have been derived
(see \cite{Pal12} for a different formula for ellipsoids  in arbitrary dimension
and \cite{Kun11} for reconstruction formulas for certain polygons and polyhedra).
Moreover, in \cite{Hal13a,Nat12} it has been shown that the same formulas may be
applied when the center set equals the boundary of an arbitrarily shaped smooth
convex  domain $\Om$, in which case  these  formulas recover  the unknown function modulo an explicitly computed integral operator $\K_\Om$.
In the present  paper we generalize these results to the case of arbitrary spatial dimension.
We have further shown, that the operator  $\K_\Om$ vanishes for elliptical domains which yields exact inversion formulas in these cases. However, as can be readily verified by using Equation~\req{kern},
the  operator  $\K_\Om$ does  not vanish for general domains.  Actually, these results give an affirmative negative
answer to the question whether the universal back-projection formula of Xu
and Wang~\cite{XuWan05} (introduced there for the case $n=3$)
is exact for general domains.  This negative result, however, does not imply that a different
back-projection type formula  may  provide exact reconstruction for general domains.
Nevertheless, it  is believed by  the author that such a~\emph{truly universal reconstruction formula}
for the spherical mean transform does not exist.


\def\cprime{$'$} \providecommand{\noopsort}[1]{}

\end{document}